\theoremstyle{plain}
\newtheorem{theorem}{Theorem}[section]
\newcommand{\mynewtheorem}[2]{
    \newaliascnt{#1}{theorem}
    \newtheorem{#1}[#1]{#2}
    \aliascntresetthe{#1}
    \expandafter\def\csname #1autorefname\endcsname{#2}
}
\theoremstyle{definition}
\theoremstyle{remark}
\let\c@figure\@undefined
\let\c@table\@undefined
\newaliascnt{figure}{theorem}
\numberwithin{figure}{section}
\newaliascnt{table}{theorem}
\numberwithin{table}{section}
\title{An epsilon-delta bound for plane algebraic curves and its use
for certified homotopy continuation of systems of plane algebraic curves}
\author{Stefan Kranich\footnote{Zentrum Mathematik (M10), Technische
Universität München, 85747~Garching, Germany; E-mail address: \url{kranich@ma.tum.de}}}
\begin{document}
\maketitle

\begin{abstract}
We explain how, given a plane algebraic curve $\mathcal{C}\colon f(x,y) =
0$, $x_1 \in \mathbb{C}$ not a singularity of $y$ w.r.t.\ $x$, and
$\varepsilon > 0$, we can compute $\delta > 0$ such that $|y_j(x_1) -
y_j(x_2)| < \varepsilon$ for all holomorphic functions $y_j(x)$ which
satisfy $f(x, y_j(x)) = 0$ in a neighbourhood of $x_1$ and for all $x_2$
with $|x_1 - x_2| < \delta$.
Consequently, we obtain an algorithm for reliable homotopy continuation
of plane algebraic curves. As an example application, we study continuous
deformation of closed discrete Darboux transforms.

Moreover, we discuss a scheme for reliable homotopy continuation of
triangular polynomial systems. A general implementation has remained
elusive so far. However, the epsilon-delta bound enables us to handle
the special case of systems of plane algebraic curves. The bound helps us
to determine a feasible step size and paths, which are equivalent w.r.t.\
analytic continuation to the actual paths of the variables but along
which we can proceed more easily.
\end{abstract}

\maketitle

\section{Motivation}

\label{sec:motivation}

In many geometric problems, variables depend analytically on some
parameter. If we want to analyze and experiment with these problems
using interactive software, whenever the user continuously modifies
the parameter, we must update the dependent variables accordingly. For
many applications, in doing so, the analytical relationship between
variables and parameter should be preserved at all times. Therefore we
need reliable algorithms for analytic continuation.

Consider for example the following problem of discrete differential
geometry~\cite{Hoffmann2009}*{Section~2.6}. Let there be a regular
discrete curve $\gamma$ in $\mathbb{CP}^1$,
i.e.\ a polygonal chain with distinct vertices $\gamma_0, \gamma_1,
\dots, \gamma_n \in \mathbb{CP}^1$. We define the \emph{discrete Darboux
transform} $\tilde\gamma$ of $\gamma$ with initial point $\tilde\gamma_0
\in \mathbb{CP}^1$ and parameter $\mu \in \mathbb{C}$ as follows:
for all $j = 1, 2, \dots, n$, let $\tilde\gamma_j \in \mathbb{CP}^1$
be the unique point for which the cross-ratio \[(\gamma_{j - 1},
\gamma_j; \tilde\gamma_j, \tilde\gamma_{j-1}) := \frac{(\gamma_{j-1}
- \tilde\gamma_j)(\gamma_j - \tilde\gamma_{j-1})}{(\gamma_{j-1} -
\tilde\gamma_{j-1})(\gamma_j - \tilde\gamma_j)} = \mu.\]
It can be shown that $\tilde\gamma_{j-1}$ is mapped to $\tilde\gamma_j$
by a unique Möbius transformation, which depends only on
$\gamma_{j-1}$, $\gamma_j$, and $\mu$, but not on $\tilde\gamma_j$. Hence,
there exists a unique Möbius transformation $M$ depending on $\gamma_0$,
$\gamma_1$, $\dots$, $\gamma_n$, and $\mu$, which maps an initial point
$\tilde\gamma_0$ to the corresponding last point $\tilde\gamma_n$ of
$\tilde\gamma$. Consequently, for every choice of $\mu \in \mathbb{C}$,
there are two choices of initial point $\tilde\gamma_0$ (counted with
multiplicity) such that $\tilde\gamma$ is a closed polygonal chain. These
are exactly the fixed points of $M$ or, in other words, the roots of the
characteristic polynomial of $M$. The vanishing of the characteristic
polynomial establishes an algebraic (particularly analytical) relationship
between $\mu$ and $\tilde\gamma_0$.

If we want to study closed Darboux transforms of a discrete curve
$\gamma$ for varying parameter $\mu$ using interactive software, then
we must analytically continue $\tilde\gamma_0$. Otherwise we may observe
sudden jumps of $\tilde\gamma_0$ under continuous movement of $\mu$,
which have no mathematical justification.

In practice, of course, we cannot modify a parameter
continuously. Instead, we obtain a series of parameter values at a series
of discrete points in time. We do not know how the parameter moves between
sample points. A natural approach would be to interpolate linearly between
consecutive parameter values (using a time parameter in the unit
interval). However, the segment between parameter values may contain
singularities beyond which analytic continuation becomes impossible. Thus
it seems reasonable to analytically continue along the polygonal chain
of parameter values as long as this is possible, and to deviate from
that path otherwise. Such a deviation can still be interpreted as a
linear interpolation between consecutive parameter values if we let
the time parameter run from $0$ to $1$ on an arbitrary path through the
complex plane instead of restricting it to the unit interval.

This is the paradigm of `complex detours' invented by Kortenkamp
and Richter-Gebert for their interactive geometry software
Cinderella~\cite{KortenkampRichterGebert2006}. It is described
in more detail in~\citelist{\cite{Kortenkamp1999}*{esp.\ Chapter~7}
\cite{KortenkampRichterGebert2001b}
\cite{KortenkampRichterGebert2002}}. Essentially the same concept
was conceived in the context of homotopy continuation by Morgan and
Sommese~\cite{MorganSommese1987}, who later named it the
`gamma trick'~\cite{SommeseWampler2005}*{Lemma~7.1.3 on p.~94}.

Once we have chosen a path for the parameter, we must determine the
right value of the dependent variable at consecutive sample points.
How this can be achieved may in fact be relatively easy to see
for us---just determine values in a way such that there are no
jumps---but hard to see for an algorithm. The tracing problem of
dynamic geometry, i.e.\ tracing the positions of dependent elements
of a geometric construction under movement of a free element, is
NP-complete already for constructions that only involve points,
lines through two points, intersection of lines, and angle
bisectors~\cite{KortenkampRichterGebert2002}.

The interactive geometry software Cinderella currently uses a heuristic
for path following.
Most homotopy continuation methods use a predictor-corrector
approach, which is generally also heuristic. For an overview
of homotopy continuation methods, consider the books by
Allgower and Georg~\ycite{AllgowerGeorg1990} or Sommese and
Wampler~\ycite{SommeseWampler2005}.
Lately, certified homotopy continuation methods have
emerged~\cites{BeltranLeykin2012,BeltranLeykin2013,HauensteinSottile2012,
HauensteinHaywoodLiddell2014}.
They are based on Smale's alpha theory~\cite{Smale1986}.

In what follows, we derive a certified algorithm for analytic
continuation of plane algebraic curves based on the following simple
observation: Due to continuity, if the parameter moves little, so does
the dependent variable. Hence, if we take small enough steps along the
parameter path, we can choose the right value of the dependent variable
based on proximity.
As an application, we return to the example of continuous deformation
of closed discrete Darboux transforms.
Moreover, we show how the algorithm generalizes to systems of plane
algebraic curves.
A comparison with other approaches demonstrates the practicability of
our algorithms.

\section{Computing an epsilon-delta bound for plane algebraic curves}

\begin{theorem}
\label{thm:epsilon-delta-bound}
Let $\mathcal{C}\colon f(x, y) = 0$ be a complex plane algebraic curve,
where \[f(x, y) = \sum\limits_{k = 0}^n a_k(x) y^{n - k}\] is a polynomial
of degree $n$ in $y$ whose coefficients $a_k(x)$ are polynomials in $x$.
Let $x_1 \in \mathbb{C}$ be a point
in the complex plane at which neither the leading coefficient $a_0(x)$
nor the discriminant of $f(x,y)$ w.r.t.\ $y$ vanish.
Then for every $\varepsilon > 0$, we can algorithmically compute $\delta >
0$ such that \[|y_j(x_1) - y_j(x_2)| < \varepsilon\] for all holomorphic
functions $y_j(x)$, $j = 1, 2, \dots, n$, that satisfy $f(x, y_j(x)) = 0$
in a neighbourhood of $x_1$ and for all $x_2$ with $|x_1 - x_2| < \delta$.
\end{theorem}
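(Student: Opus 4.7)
The natural tool is Rouché's theorem applied separately near each root of $f(x_1,\cdot)$. Since $a_0(x_1) \ne 0$ and the discriminant of $f$ with respect to $y$ is nonzero at $x_1$, the polynomial $f(x_1, y)$ has exactly $n$ distinct roots $y_1(x_1), \dots, y_n(x_1)$, and by the implicit function theorem these extend to the required holomorphic branches $y_j(x)$ on a neighbourhood of $x_1$. It therefore suffices to show that for $\delta$ small enough and every $x_2$ with $|x_1-x_2|<\delta$, the polynomial $f(x_2,\cdot)$ has exactly one root in each closed disc $D_j = \{y : |y - y_j(x_1)| \le \varepsilon\}$. Continuity of the $y_j$ together with disjointness of the $D_j$ will then force that unique root to be precisely $y_j(x_2)$.

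First I would shrink $\varepsilon$, if necessary, so that it is strictly less than half the minimum pairwise distance between the $y_j(x_1)$, making the $D_j$ disjoint. From the factorisation $f(x_1, y) = a_0(x_1)\prod_{k=1}^{n}(y-y_k(x_1))$ I then obtain, on each boundary $\partial D_j$, the explicit lower bound \[m_j := |a_0(x_1)|\,\varepsilon \prod_{k \ne j}\bigl(|y_j(x_1) - y_k(x_1)| - \varepsilon\bigr) > 0\] for $|f(x_1, y)|$. Writing $f(x_2, y) - f(x_1, y) = \sum_k \bigl(a_k(x_2)-a_k(x_1)\bigr)\,y^{n-k}$, I would bound this expression on $\partial D_j$ by applying a standard Lipschitz/mean-value estimate to each coefficient polynomial $a_k$ over a preliminary disc $|x-x_1|\le r$, producing a bound of the form $L_j\cdot|x_1-x_2|$. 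Choosing $\delta < \min\bigl(r,\; \min_j m_j / L_j\bigr)$ ensures $|f(x_2,y)-f(x_1,y)| < |f(x_1,y)|$ on every $\partial D_j$ simultaneously, so Rouché yields exactly one root of $f(x_2,\cdot)$ inside each $D_j$.

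Finally, since these $n$ discs account for all $n$ roots of $f(x_2,\cdot)$ (after also shrinking $\delta$ enough that $a_0(x_2)\ne 0$, so the degree is preserved), each analytic continuation $y_j(x_2)$ is the unique root of $f(x_2,\cdot)$ in $D_j$, whence $|y_j(x_1)-y_j(x_2)|<\varepsilon$ as desired. The main obstacle is the algorithmic side rather than the analytic one: the roots $y_k(x_1)$ are algebraic numbers that must be approximated, or rigorously enclosed, well enough to evaluate $m_j$, and the constant $L_j$ must be produced from explicit numerical bounds on the $a_k$ and their derivatives over the disc $|x-x_1|\le r$. None of this is deep, but every ingredient has to be made constructive if the final $\delta$ is to be \emph{certifiably} correct rather than merely known to exist.
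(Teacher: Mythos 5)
Your proof is correct but takes a genuinely different route from the paper. You control the roots of $f(x_2,\cdot)$ directly via Rouch\'e's theorem: lower-bound $|f(x_1,y)|$ on the boundary of each disc $D_j$ using the factorisation $f(x_1,y)=a_0(x_1)\prod_k(y-y_k(x_1))$, upper-bound $|f(x_2,y)-f(x_1,y)|$ via Lipschitz estimates on the coefficient polynomials $a_k$, and conclude that each $D_j$ traps exactly one root. The paper instead works on the level of the branch functions $y_j$ themselves: it Taylor-expands $y_j$ about $x_1$ to second order with a Cauchy-type remainder bound (\autoref{lem:taylor}), computes $y_j'(x_1)$ by implicit differentiation (\autoref{lem:implicit-differentiation}), bounds the maximum modulus $M$ of $y_j$ on a circle of radius $\rho$ via Fujiwara's root bound (\autoref{lem:fujiwara}) and elementary estimates on trigonometric polynomials (\autoref{lem:trigonometric}), and then solves the resulting quadratic inequality in $|x_2-x_1|$ to obtain the closed formula~\eqref{eq:epsilon-delta-bound}. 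Each approach has merits: yours is conceptually cleaner and gives direct root separation, which is exactly what analytic continuation by proximity needs, and it avoids any discussion of the radius of convergence; the paper's approach exploits $y_j'(x_1)$, so the resulting $\delta$ is first-order sharp and the closed-form expression has transparent monotonicity in $\varepsilon$, $\rho$, $Y$, $M$, which the paper later uses (e.g.\ in \autoref{lem:range-estimate} and \autoref{rem:delta-epsilon-bound}). One small caveat on your side: your argument requires shrinking $\varepsilon$ below half the minimum root gap to make the discs $D_j$ disjoint, which is harmless for correctness (the resulting $\delta$ still certifies the original $\varepsilon$) and coincides with the $\varepsilon$ the paper's algorithm actually uses, but it means your bound does not benefit from a generous $\varepsilon$ the way~\eqref{eq:epsilon-delta-bound} does. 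Your closing step---that the unique root of $f(x_2,\cdot)$ in $D_j$ is the analytic continuation $y_j(x_2)$---deserves one more sentence: since $|f(x,y)|>0$ on every $\partial D_j$ for all $x$ on the segment from $x_1$ to $x_2$, the continuous path $t\mapsto y_j(x_1+t(x_2-x_1))$ cannot cross $\partial D_j$, hence stays in $D_j$.
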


\begin{remark}
\label{rem:analytic-continuation}
How does \autoref{thm:epsilon-delta-bound} help us to perform analytic
continuation? Let $\varepsilon$ be half the minimal distance between
the $y$-values at $x_1$. Then for any $x_2$ less than $\delta$ away from
$x_1$ the following holds: The $y$-value $y_j(x_2)$, which results from
analytic continuation of $y_j(x)$ along the segment from $x_1$ to $x_2$,
is closer to $y_j(x_1)$ than to any other $y$-value at $x_1$.
In other words, $\delta$ provides an upper bound for the step width
of parameter $x$ such that we may match $y$-values on the same branch
based on proximity.
\end{remark}

\noindent Our plan for the proof of \autoref{thm:epsilon-delta-bound}
is as follows: We will see that there is an upper bound of $\delta$
depending on \begin{enumerate}
\item the radius of convergence of the Taylor expansion of $y_j(x)$
at $x_1$,
\item the modulus of the derivative of $y_j(x)$ at $x_1$,
\item the maximum modulus of $y_j(x)$ on a circle centred at $x_1$,
\end{enumerate}
for $j = 1, 2, \dots, n$, respectively.
We derive a formula for that upper bound and then compute bounds for
its ingredients. To this end, we need the following lemmas.

\begin{lemma}
\label{lem:taylor}
Let $U \subset \mathbb{C}$ be an open subset of the complex plane, and
let \[y_j\colon U \to \mathbb{C}\] be holomorphic. Taylor expansion of $y_j$
around $x_1 \in U$ yields \[y_j(x_2) = y_j(x_1) + (x_2 - x_1) y_j'(x_1) +
{(x_2 - x_1)}^2 R(x_2),\] for all $x_2 \in \mathbb{C}$ such that $|x_2 -
x_1| < \rho$ and sufficiently small $\rho > 0$. The remainder $R(x_2)$
satisfies \[|R(x_2)| \leq \frac{M}{\rho (\rho - |x_2 - x_1|)}\] where \[M
= \max_{t \in \left[0,2 \pi\right]} |y_j(x_1 + \rho \mathrm{e}^{\mathrm{i} t})|.\]
\end{lemma}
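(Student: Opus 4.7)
The plan is to realize $R(x_2)$ as the tail of the Taylor series of $y_j$ at $x_1$ beyond the linear term, and then to bound that tail term by term using Cauchy's inequalities. First, I would choose $\rho > 0$ small enough that the closed disc $\{\zeta \in \mathbb{C} : |\zeta - x_1| \leq \rho\}$ is contained in $U$. Since $y_j$ is holomorphic on $U$, its Taylor series around $x_1$,
\[y_j(x_2) = \sum_{k=0}^{\infty} \frac{y_j^{(k)}(x_1)}{k!} (x_2-x_1)^k,\]
converges absolutely for every $x_2$ with $|x_2-x_1|<\rho$. Peeling off the constant and linear terms and factoring $(x_2-x_1)^2$ out of the remainder identifies
\[R(x_2) = \sum_{k=2}^{\infty} \frac{y_j^{(k)}(x_1)}{k!} (x_2-x_1)^{k-2}.\]

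Next I would apply Cauchy's estimates on the boundary circle of radius $\rho$: because $M$ is the supremum of $|y_j|$ on that circle,
\[\left|\frac{y_j^{(k)}(x_1)}{k!}\right| \leq \frac{M}{\rho^k}\]
for every $k \geq 0$. Substituting into the series for $R$ and applying the triangle inequality yields
\[|R(x_2)| \leq \sum_{k=2}^{\infty} \frac{M}{\rho^k} |x_2-x_1|^{k-2} = \frac{M}{\rho^2} \sum_{\ell=0}^{\infty} \left(\frac{|x_2-x_1|}{\rho}\right)^{\ell}.\]
Since $|x_2-x_1|/\rho < 1$, this geometric series sums to $1/(1 - |x_2-x_1|/\rho)$, and a short simplification yields exactly the claimed bound $M/\bigl(\rho(\rho - |x_2-x_1|)\bigr)$.

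The argument is essentially textbook once one commits to the Cauchy-estimate approach, so I do not anticipate a serious obstacle; the only real choice is whether to work from the power series (as above) or, equivalently, from the integral representation
\[R(x_2) = \frac{1}{2\pi \mathrm{i}} \oint_{|\zeta-x_1|=\rho} \frac{y_j(\zeta)}{(\zeta-x_1)^2 (\zeta-x_2)} \, \mathrm{d}\zeta,\]
to which one applies the standard $ML$-inequality using $|\zeta - x_1| = \rho$ and $|\zeta - x_2| \geq \rho - |x_2-x_1|$. Both routes give the same bound; the series approach is slightly more elementary and reflects the natural decomposition of $y_j$ into Taylor polynomial plus remainder, whereas the contour-integral version makes the role of $M$ as a boundary maximum more transparent.
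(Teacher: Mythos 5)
Your proof is correct. Note that the paper does not actually supply a proof of this lemma; it only cites Ahlfors (pp.\ 124--126) as a standard reference, so there is no ``paper's own proof'' to compare against line by line. Your primary route---isolating the tail $R(x_2) = \sum_{k\geq 2} \tfrac{y_j^{(k)}(x_1)}{k!}(x_2-x_1)^{k-2}$, bounding each coefficient by Cauchy's estimate $|y_j^{(k)}(x_1)|/k! \leq M/\rho^k$, and summing the resulting geometric series to get $\tfrac{M}{\rho^2}\cdot\tfrac{1}{1 - |x_2-x_1|/\rho} = \tfrac{M}{\rho(\rho - |x_2-x_1|)}$---is a clean and entirely standard derivation and arrives at exactly the stated bound. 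The alternative you sketch, writing $R(x_2)$ as the contour integral $\tfrac{1}{2\pi\mathrm{i}}\oint_{|\zeta - x_1|=\rho}\tfrac{y_j(\zeta)}{(\zeta - x_1)^2(\zeta - x_2)}\,\mathrm{d}\zeta$ and applying the $ML$-estimate with $|\zeta - x_2| \geq \rho - |x_2 - x_1|$, is in fact the one Ahlfors uses in the cited pages, so if you want your argument to match the source the paper points to, that is the version to spell out; but either establishes the lemma, and your decision to offer both with a brief discussion of the trade-offs is exactly right. The only small thing worth making explicit (which you do) is that $\rho$ must be chosen so that the \emph{closed} disc of radius $\rho$ about $x_1$ lies in $U$, so that $M$ is finite and Cauchy's estimates (or the contour integral over $|\zeta - x_1| = \rho$) are legitimate.
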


\noindent \autoref{lem:taylor} is a standard result of complex
analysis~\cite{Ahlfors1979}*{p.~124--126}, which we therefore do not
prove here.

\begin{lemma}[implicit differentiation]
\label{lem:implicit-differentiation}
Let $f(x, y)$ be a complex polynomial. Let $U \subset \mathbb{C}$ be an
open subset of the complex plane. Let $y_j\colon U \to \mathbb{C}$ be a
holomorphic function that satisfies $f(x, y_j(x)) = 0$ for all $x \in
U$. Then for all $x_1 \in U$ with $f_y(x_1, y_j(x_1)) \neq 0$ it follows
that \[y_j'(x_1) = -\frac{f_x(x_1, y_j(x_1))}{f_y(x_1, y_j(x_1))}.\]
\end{lemma}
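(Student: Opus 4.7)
The plan is to apply the complex chain rule to the identity $f(x, y_j(x)) = 0$ and then solve for $y_j'(x_1)$.

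First I would observe that the map $x \mapsto (x, y_j(x))$ is holomorphic on $U$ because $y_j$ is holomorphic, and that $f \colon \mathbb{C}^2 \to \mathbb{C}$ is entire as a complex polynomial. Hence the composition $g(x) := f(x, y_j(x))$ is a holomorphic function of one variable on $U$. Since $g$ vanishes identically on $U$ by hypothesis, its complex derivative $g'(x)$ is also identically zero on $U$.

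Next I would compute $g'(x)$ via the holomorphic chain rule in two variables, which applies verbatim to entire functions composed with a holomorphic section. This gives
\[g'(x) = f_x(x, y_j(x)) + f_y(x, y_j(x))\, y_j'(x) = 0\]
for every $x \in U$. Evaluating at $x_1$ and using the assumption $f_y(x_1, y_j(x_1)) \neq 0$ allows me to divide and obtain the asserted formula
\[y_j'(x_1) = -\frac{f_x(x_1, y_j(x_1))}{f_y(x_1, y_j(x_1))}.\]

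There is no serious obstacle here; the only point that needs a word of justification is the use of the chain rule in the complex setting, but this is immediate from the Wirtinger formalism (or from expanding $f$ as a finite sum of monomials $x^a y^b$, differentiating termwise using the ordinary one-variable product and chain rules applied to each monomial $x^a\, y_j(x)^b$, and collecting terms). Everything else is a routine algebraic manipulation.
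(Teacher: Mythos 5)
Your proof is correct and takes essentially the same route as the paper: both apply the chain rule to the identity $f(x, y_j(x)) = 0$ to obtain $f_x + f_y\, y_j' = 0$ and then solve for $y_j'(x_1)$. Your extra paragraph justifying the holomorphic chain rule is a welcome bit of rigor but does not change the argument.
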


\begin{proof}
By the chain rule, the total differential of $f(x,y_j(x)) = 0$ w.r.t.\ $x$ is
\[Df(x,y_j(x)) = f_x(x,y_j(x)) + f_y(x,y_j(x)) \cdot y_j'(x) = 0.\]
Therefore
\[y_j'(x_1) = -\frac{f_x(x_1, y_j(x_1))}{f_y(x_1, y_j(x_1))}.\qedhere\]
\end{proof}

\begin{lemma}[\ocite{Fujiwara1916}*{Inequality~3 on p.~168}]
\label{lem:fujiwara}
Consider a polynomial \[p(x) = \sum\limits_{k = 0}^{n} a_k x^{n - k}\]
of degree $n$ with complex coefficients $a_k \in \mathbb{C}$, $k = 0,
1, \dots, n$. Then all $\bar{x} \in \mathbb{C}$ with $p(\bar{x}) = 0$ satisfy
\[|\bar{x}| < 2 \max \left\{\left|\frac{a_k}{a_0}\right|^\frac{1}{k}
\colon k = 1, \dots, n \right\}.\]
\end{lemma}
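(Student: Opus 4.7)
Let $B := \max\{|a_k/a_0|^{1/k} : k = 1, \dots, n\}$ denote the quantity appearing on the right-hand side (without the factor of $2$). The statement I have to prove is contrapositive-friendly: rather than bounding an arbitrary root directly, I will show that if $|\bar{x}| \geq 2B$, then $p(\bar{x}) \neq 0$, and hence any root $\bar{x}$ must satisfy $|\bar{x}| < 2B$.

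The key manipulation is to factor out the leading term. Dividing through by $a_0$ (which is nonzero since $p$ has degree $n$), I can write
\[\frac{p(\bar{x})}{a_0} = \bar{x}^n + \sum_{k=1}^n \frac{a_k}{a_0}\,\bar{x}^{n-k},\]
and then apply the reverse triangle inequality to get
\[\left|\frac{p(\bar{x})}{a_0}\right| \geq |\bar{x}|^n - \sum_{k=1}^n \left|\frac{a_k}{a_0}\right| |\bar{x}|^{n-k}.\]
By the definition of $B$, each ratio satisfies $|a_k/a_0| \leq B^k$, so after pulling out $|\bar{x}|^n$ the problem reduces to bounding the finite geometric-type sum $\sum_{k=1}^n (B/|\bar{x}|)^k$.

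This is where the factor of $2$ enters. Under the assumption $|\bar{x}| \geq 2B$, I have $B/|\bar{x}| \leq 1/2$, so
\[\sum_{k=1}^n \left(\frac{B}{|\bar{x}|}\right)^k \leq \sum_{k=1}^n \left(\frac{1}{2}\right)^k = 1 - \left(\frac{1}{2}\right)^n < 1,\]
the strict inequality coming from the fact that the finite geometric sum falls short of its infinite limit. Substituting back yields $|p(\bar{x})/a_0| \geq |\bar{x}|^n (1/2)^n > 0$, so $\bar{x}$ is not a root.

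\textbf{Expected difficulty.} There is no real obstacle here; the proof is a one-line reverse triangle inequality followed by a geometric series estimate. The only subtlety worth watching is that the conclusion is a \emph{strict} inequality $|\bar{x}| < 2B$, which is why one has to note that the finite sum $\sum_{k=1}^n 2^{-k}$ is strictly less than $1$ rather than merely $\leq 1$. Everything else is routine.
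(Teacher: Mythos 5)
Your proof is correct and takes essentially the same route as the paper's: a reverse triangle inequality lower bound on $|p(\bar{x})|$, followed by a geometric-series estimate with ratio $B/|\bar{x}| \leq 1/2$, where $B = \max_k |a_k/a_0|^{1/k}$. The only cosmetic difference is that you normalize by $a_0$ and factor out $|\bar{x}|^n$, whereas the paper keeps $a_0$ in place and compares the summands $|a_k|\,|x|^{n-k}$ term by term against $2^{-k}|a_0|\,|x|^n$.
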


\begin{proof}
Consider the inequality \begin{equation}
\label{eq:fujiwara-inequality}
|p(x)| \geq |a_0| {|x|}^n - \sum_{k=1}^n |a_k| {|x|}^{n-k}.
\end{equation} The RHS of~\eqref{eq:fujiwara-inequality} is positive
if \[|a_0| {|x|}^n \geq 2^k |a_k| {|x|}^{n-k}, \quad k = 1,
2, \dots, n,\] because then \[|a_0| |x|^n > (1 - 2^{-n}) |a_0| |x|^n =
\sum_{k = 1}^n 2^{-k} |a_0| |x|^n \geq \sum_{k=1}^n |a_k| {|x|}^{n-k}.\]
Hence, $|p(x)| > 0$ if \[|x| \geq \max {\left\{2^k
\left|\frac{a_k}{a_0}\right|\right\}}^\frac{1}{k}\] and thus \[|\bar{x}|
< 2 \max \left\{\left|\frac{a_k}{a_0}\right|^\frac{1}{k} \colon k =
1, \dots, n \right\}\] for all zeros $\bar{x} \in \mathbb{C}$ of $p(x)$.
\end{proof}

\begin{lemma}[bounds for trigonometric polynomials]
\label{lem:trigonometric}
Consider a trigonometric polynomial of degree $n$ of the form
\[p(x_1 + \rho \mathrm{e}^{\mathrm{i} t}) = \sum\limits_{k = 0}^n a_k {(x_1 +
\rho \mathrm{e}^{\mathrm{i} t})}^{n - k}.\]
Then \[|p(x_1 + \rho \mathrm{e}^{\mathrm{i} t})| \leq \sum\limits_{k = 0}^n |a_k|
{(|x_1| + |\rho|)}^{n - k}.\]
Moreover, if the zeros $\bar{x}_1$, $\bar{x}_2$, $\dots$, $\bar{x}_n$
of $p(x)$ satisfy $|\bar{x}_k - x_1| > \rho$ then
\[|p(x_1 + \rho \mathrm{e}^{\mathrm{i} t})| \geq |a_0| \prod\limits_{k = 0}^{n}
(|\bar{x}_k - x_1| - \rho) > 0.\]
\end{lemma}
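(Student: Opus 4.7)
The plan is to prove both bounds by direct application of the triangle inequality in its forward and reverse forms, handling the two parts separately.

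For the \emph{upper} bound, I would first apply the triangle inequality to the sum defining $p$:
\[|p(x_1 + \rho \mathrm{e}^{\mathrm{i} t})| \leq \sum_{k=0}^n |a_k| \, |x_1 + \rho \mathrm{e}^{\mathrm{i} t}|^{n-k}.\]
A second application, this time to $x_1 + \rho \mathrm{e}^{\mathrm{i} t}$, gives $|x_1 + \rho \mathrm{e}^{\mathrm{i} t}| \leq |x_1| + |\rho \mathrm{e}^{\mathrm{i} t}| = |x_1| + \rho$. Substituting this estimate into each summand produces exactly the asserted bound.

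For the \emph{lower} bound, I would use the factorization $p(x) = a_0 \prod_{k=1}^n (x - \bar{x}_k)$, which is available because $p$ has degree $n$ and $\bar{x}_1, \dots, \bar{x}_n$ are all of its zeros. Evaluating at $x_2 := x_1 + \rho \mathrm{e}^{\mathrm{i} t}$ and taking absolute values yields
\[|p(x_2)| = |a_0| \prod_{k=1}^n |\rho \mathrm{e}^{\mathrm{i} t} - (\bar{x}_k - x_1)|.\]
The reverse triangle inequality then gives
\[|\rho \mathrm{e}^{\mathrm{i} t} - (\bar{x}_k - x_1)| \geq \bigl||\bar{x}_k - x_1| - \rho\bigr|.\]
Under the hypothesis $|\bar{x}_k - x_1| > \rho$, the outer absolute value drops away and the resulting quantity $|\bar{x}_k - x_1| - \rho$ is strictly positive; taking the product over all $k$ then yields both the inequality and the strict positivity claimed.

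There is no substantive obstacle: both parts are routine applications of the triangle inequality. The only item worth double-checking is that the hypothesis $|\bar{x}_k - x_1| > \rho$ plays a dual role in the lower bound --- it removes the outer absolute value from the reverse triangle inequality and simultaneously guarantees positivity of each factor. Incidentally, the product in the statement of the lemma should run over the $n$ zeros $\bar{x}_1, \dots, \bar{x}_n$ of $p$; the index range $k = 0, \dots, n$ appears to be a typographical slip that one should silently correct when writing out the proof.
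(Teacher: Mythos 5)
Your proof is correct and follows essentially the same route as the paper: triangle inequality for the upper bound, and the factorization combined with the reverse triangle inequality (plus the hypothesis $|\bar{x}_k - x_1| > \rho$ and $a_0 \neq 0$) for the lower bound. Your observation that the product index should run $k = 1, \dots, n$ rather than $k = 0, \dots, n$ is a correct catch of a typographical slip present in both the statement and the paper's own proof.
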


\begin{proof}
The upper bound follows from the triangle inequality. The lower bound
follows from the factorization \[p(x_1 + \rho \mathrm{e}^{\mathrm{i} t}) =
a_0 \prod\limits_{k = 0}^n (x_1 + \rho \mathrm{e}^{\mathrm{i} t} - \bar{x}_k)\]
and the fact that $|x_1 + \rho \mathrm{e}^{\mathrm{i} t} - \bar{x}_k| \geq |\rho
- |\bar{x}_k - x_1||$. Note that the lower bound is positive by the
assumptions that $|\bar{x}_k - x_1| > \rho$ and that $p$ has degree $n$,
i.e.\ $a_0 \neq 0$.
\end{proof}

\begin{proof}[Proof of \autoref{thm:epsilon-delta-bound}]
Let $y_j(x)$, $j = 1, \dots, n$, denote the holomorphic functions that
satisfy $f(x, y_j(x)) = 0$ in a neighbourhood of $x_1$.
By \autoref{lem:taylor},
\begin{equation}
\label{eq:taylor-expansion}
y_j(x_2) = y_j(x_1) + (x_2 - x_1) y_j'(x_1) + {(x_2 - x_1)}^2 R_j(x_2)
\end{equation}
for all $x_2 \in \mathbb{C}$ such that $|x_2 - x_1| < \rho$ and
sufficiently small $\rho > 0$.
If we bring $y_j(x_1)$ to the LHS of~\eqref{eq:taylor-expansion}, take
the absolute value on both sides, and apply the triangle inequality,
we see that
\begin{align}
\left|y_j(x_1) - y_j(x_2)\right| &= |x_2 - x_1||y_j'(x_1) + (x_2 -
x_1) R_j(x_2)|\notag\\
&\leq |x_2 - x_1|(|y_j'(x_1)| + |x_2 - x_1| |R_j(x_2)|)\notag\\
&= |R_j(x_2)| |x_2 - x_1|^2 + |y_j'(x_1)| |x_2 - x_1|.
\end{align}
Hence, under the above assumptions,
\begin{equation}
\label{eq:sufficient-bound-quadratic-form}
|R_j(x_2)| |x_2 - x_1|^2 + |y_j'(x_1)| |x_2 - x_1| - \varepsilon < 0.
\end{equation}
is a sufficient condition for $|y_j(x_1) - y_j(x_2)| < \varepsilon$.

The LHS of~\eqref{eq:sufficient-bound-quadratic-form} is strictly
increasing in $|y_j'(x_1)|$ and $|R_j(x_2)|$. Therefore, if we plug in
the bounds
\begin{equation}
\label{eq:capital-y}
|y_j'(x_1)| \leq \max_j |y_j'(x_1)| =: Y
\end{equation} and
\[|R_j(x_2)| \leq \frac{M}{\rho (\rho - |x_2 - x_1|)}\]
(see~\autoref{lem:taylor})
into~\eqref{eq:sufficient-bound-quadratic-form}, we obtain a stronger
sufficient condition for \[|y_j(x_1) - y_j(x_2)| < \varepsilon,\]
namely
\begin{align}
&\frac{M}{\rho (\rho - |x_2 - x_1|)} |x_2 - x_1|^2 + Y |x_2 - x_1| -
\varepsilon < 0 \notag\\
\Leftrightarrow\ &M |x_2 - x_1|^2 + \rho (\rho - |x_2 - x_1|) (Y |x_2
- x_1| - \varepsilon) < 0 \notag\\
\Leftrightarrow\ &(M - \rho Y) |x_2 - x_1|^2 + \rho (\rho Y
+ \varepsilon) |x_2 - x_1| - \varepsilon \rho^2 < 0.
\label{eq:sufficient-bound-plugging-in}
\end{align}

\noindent How we can transform~\eqref{eq:sufficient-bound-plugging-in}
into a sufficient bound on $|x_2 - x_1|$ depends on the sign of $M -
\rho Y$.

First case: $M - \rho Y > 0$. The LHS
of~\eqref{eq:sufficient-bound-plugging-in} describes a smile parabola
in $|x_2 - x_1|$ with a positive and a negative root. Since $|x_2 - x_1|
\geq 0$, we need only bound $|x_2 - x_1|$ from above by the positive
root, i.e.\
\begin{align*}
|x_2 - x_1| &< \frac{-\rho (\rho Y + \varepsilon) +
\sqrt{\rho^2 {(\rho Y + \varepsilon)}^2 + 4 (M - \rho
Y) \varepsilon \rho^2}}{2 (M - \rho Y)}\\
&= \frac{\rho \left(\sqrt{{(\rho Y - \varepsilon)}^2 + 4 \varepsilon
M} - (\rho Y + \varepsilon)\right)}{2 (M - \rho Y)}.
\end{align*}

\noindent Second case: $M - \rho Y < 0$. The LHS
of~\eqref{eq:sufficient-bound-plugging-in} describes a frown parabola in
$|x_2 - x_1|$ with one root greater than $\rho$ and one root between $0$
and $\rho$. Since $|x_2 - x_1| < \rho$ by definition, we need only bound
$|x_2 - x_1|$ from above by the smaller root, i.e.\
\begin{align*}
|x_2 - x_1| &< \frac{\rho (\rho Y + \varepsilon) - \rho
\sqrt{{(\rho Y + \varepsilon)}^2 - 4 (\rho Y - M)
\varepsilon}}{2 (\rho Y - M)}\\
&= \frac{\rho \left(\sqrt{{(\rho Y - \varepsilon)}^2 +
4 \varepsilon M} - (\rho Y + \varepsilon)\right)}{2 (M -
\rho Y)}.
\end{align*}

\noindent Third case: $M - \rho Y = 0$. The LHS
of~\eqref{eq:sufficient-bound-plugging-in} reduces to
\[
\rho (\rho Y + \varepsilon) |x_2 - x_1| - \varepsilon \rho^2
< 0 \quad\Leftrightarrow\quad |x_2 - x_1| < \frac{\varepsilon \rho}{\rho Y +
\varepsilon}.
\]
This bound is asymptotically equivalent to the previous bounds as $M$
approaches $\rho Y$. Altogether, we thus arrive at the
sufficient bound
\begin{equation}
\label{eq:sufficient-bound}
|x_2 - x_1| < \frac{\rho \left(\sqrt{{(\rho Y - \varepsilon)}^2 + 4
\varepsilon M} - (\rho Y + \varepsilon)\right)}{2 (M - \rho Y)}.
\end{equation}

\noindent The RHS of~\eqref{eq:sufficient-bound} has the expected
qualitative behaviour: It is strictly increasing in $\varepsilon$ and
$\rho$, and strictly decreasing in $M$ and $Y$.

It remains to be shown that we can compute bounds for the ingredients
$\rho$, $Y$, and $M$ of~\eqref{eq:sufficient-bound}.

\autoref{lem:taylor} (and thus our argument) is valid if and only if
$\rho$ is smaller than the radius of convergence of the Taylor expansion
of $y_j(x)$. Therefore, we must choose $\rho$ smaller than the distance
between $x_1$ and the singularities of $y_j(x)$, $j = 1, 2, \dots, n$.
Recall that $y_j(x)$ satisfies $f(x, y_j(x)) = 0$ in a neighbourhood of
$x_1$, where \[f(x, y) = \sum\limits_{k = 0}^n a_k(x) y^{n - k}.\]
In particular, $\rho$ must be smaller than the distance between $x_1$
and the zeros of $a_0(x)$. The zeros of $a_0(x)$ are exactly the poles
of $y_j(x)$.
The remaining finite singularities of $y_j(x)$ are exactly the finite
ramification points of $y_j(x)$. These are zeros of the discriminant of
$f(x,y)$ w.r.t.\ $y$.
Hence, we may choose any
\[\rho < \min\{|x_1 - x| \colon a_0(x) \cdot \Delta_y(f(x,y))(x) = 0\},\]
where $\Delta_y(f(x,y))(x)$ denotes the discriminant of $f$ w.r.t.\ $y$.

We can compute
\[Y = \max_j |y_j'(x_1)| = \max_j \left|\frac{f_x(x_1, y_j(x_1))}{f_y(x_1,
y_j(x_1))}\right|\]
by \autoref{lem:implicit-differentiation}. Note that the denominator does
not vanish by the assumption that $x_1$ is not a zero of the discriminant
of $f(x, y)$ w.r.t.\ $y$.

Therefore, $M$ remains to be computed or bounded from above. To that
end, we can apply \autoref{lem:fujiwara} to
\[f(x, y_j(x)) = \sum\limits_{k = 0}^n a_k(x) {y_j(x)}^k,\]
interpreted as a polynomial in $y_j(x)$.
By our choice of $\rho$, the leading coefficient $a_0(x)$ does not vanish
for all $x$ with $|x - x_1| \leq \rho$.
For those $x$ and for all $j = 1, 2, \dots, n$, \autoref{lem:fujiwara} yields
\[|y_j(x)| < 2 \max \left\{{\left|\frac{a_k(x)}{a_0(x)}\right|}^\frac{1}{k}
\mid k = 1, \dots, n\right\}.\]
Consequently, 
\[M < 2 \max_{t \in \left[0, 2\pi\right]} \left\{{\left|\frac{a_k(x_1 +
\rho \mathrm{e}^{\mathrm{i} t})}{a_0(x_1 + \rho \mathrm{e}^{\mathrm{i} t})}\right|}^\frac{1}{k} \mid k =
1, \dots, n\right\}.\]
By \autoref{lem:trigonometric}, we can compute upper bounds $\tilde{a}_k$
of $\max_{t \in \left[0,2\pi\right]} |a_k(x_1 + \rho \mathrm{e}^{\mathrm{i} t})|$ and a
lower bound $\tilde{a}_0 > 0$ of $\min_{t \in \left[0,2\pi\right]}
|a_0(x_1 + \rho \mathrm{e}^{\mathrm{i} t})|$, which are much easier to compute than these
extreme values.

The zeros of $a_0(x)$ and of $\Delta_y(f(x,y))(x)$ can be computed (at
least to arbitrary precision) using a root-finding algorithm. Similarly,
the values $y_j(x_1)$, $j = 1, 2, \dots, n$, can be computed (at least to
arbitrary precision) by solving \[f(x_1, y_j(x_1)) = 0\] for $y_j(x_1)$.

Let us summarize our argument: We may choose
\begin{equation}
\label{eq:epsilon-delta-bound}
\delta = \frac{\rho \left(\sqrt{{(\rho Y - \varepsilon)}^2 + 4 \varepsilon
M} - (\rho Y + \varepsilon)\right)}{2 (M - \rho Y)},
\end{equation}
where
\begin{gather*}
\rho < \min\{|x_1 - x| \colon a_0(x) \cdot \Delta_y(f(x,y))(x) = 0\},\\
Y:= \max\limits_j \left|\dfrac{f_x(x_1, y_j(x_1))}{f_y(x_1,
y_j(x_1))}\right|,\quad
M := 2 \max\limits_k
{\left(\frac{\tilde{a}_k}{\tilde{a}_0}\right)}^{\frac{1}{k}}. \qedhere
\end{gather*}
\end{proof}

\begin{remark}
For \autoref{thm:epsilon-delta-bound} to hold, $f(x,y)$ needs neither
be irreducible nor square-free. However, if $f(x,y)$ is not square-free,
the discriminant may vanish identically and the epsilon-delta bound is
no longer useful. If $f(x,y)$ is square-free but not irreducible, the
epsilon-delta bound for $y$-values on one irreducible component
may be smaller than necessary due to the influence of zeros of the
discriminant of other irreducible components.
\end{remark}

\section{Certified homotopy continuation of plane algebraic curves}

\noindent \autoref{thm:epsilon-delta-bound} enables us to solve the
following problem:

\begin{problem}
\label{prob:analytic-continuation}
Consider a plane algebraic curve \[\mathcal{C}\colon f(x,y) = 0.\] Let
$x\colon \left[0,1\right] \to \mathbb{C},\ t \mapsto x(t)$ be a monotonic
(distance non-decreasing) path, i.e.\
\[|x(0) - x(t_1)| \leq |x(0) - x(t_2)| \quad \text{for} \quad 0 \leq
t_1 \leq t_2 \leq 1.\]
Let $y(0) \in \mathbb{C}$ satisfy $f(x(0), y(0)) = 0$.
If analytic continuation of $y$ along $x(t)$ is possible, determine
the value $y(1)$ that results from initial value $y(0)$ under analytic
continuation of $y$ along $x(t)$.
\end{problem}

\noindent The algorithm for~\autoref{prob:analytic-continuation} follows
from \autoref{rem:analytic-continuation}:

\begin{algorithm}
\label{alg:analytic-continuation}
Let $f(x,y)$, $x(t)$, and $y(0)$ be defined as
in~\autoref{prob:analytic-continuation}.
\begin{enumerate}
\item Let $T = 0$.
\item While $T < 1$,
\begin{enumerate}
\item Let $\varepsilon$ be half the minimum distance between the $y$
with \[f(x(T), y) = 0.\]
\item Compute $\delta$ by the epsilon-delta bound
of~\autoref{thm:epsilon-delta-bound}.
\item Use bisection to maximize $T^\ast \in \left[T,1\right]$ such that
$|x(T) - x(T^\ast)| < \delta$.
\item Let $y(T^\ast)$ be the $y$ with $f(x(T^\ast), y) = 0$ closest to $y(T)$.
\item Let $T = T^\ast$.
\end{enumerate}
\item Output $y(1)$ and stop.
\end{enumerate}
\end{algorithm}

\section[Continuous deformation of closed discrete Darboux
transforms]{Case study: continuous deformation of closed discrete
Darboux transforms}
\label{sec:case-study}

\autoref{alg:analytic-continuation} shows how the epsilon-delta
bound can be used for certified homotopy continuation of plane algebraic
curves. In this section, as an example application, let us return to the
closed discrete Darboux transform introduced in \autoref{sec:motivation}.

We generally follow the exposition of~\ocite{Hoffmann2009}*{Section~2.6}
but use a slightly different definition of \emph{cross-ratio}.
(A value $\mu$ of our cross-ratio corresponds to a value $1 - \mu$
of the cross-ratio in~\cite{Hoffmann2009}*{Section~2.6} and vice versa.)

Recall the definition of discrete Darboux transform: 

\begin{definition}[discrete Darboux transform]
Let $\gamma$ be a regular discrete curve in $\mathbb{CP}^1$ with vertices
$\gamma_0$, $\gamma_1$, $\dots$, $\gamma_n \in \mathbb{CP}^1$. We choose
an initial point $\tilde\gamma_0 \in \mathbb{CP}^1$ and prescribe a
cross-ratio $\mu \in \mathbb{C}$. The \emph{discrete Darboux transform
of $\gamma$ with initial point $\tilde\gamma_0$ and parameter $\mu$}
is the unique discrete curve $\tilde\gamma$ whose vertices $\tilde\gamma_j$,
$j = 1, 2, \dots, n$, satisfy
\[(\gamma_{j - 1}, \gamma_j; \tilde\gamma_j, \tilde\gamma_{j-1})
:= \frac{(\gamma_{j-1} - \tilde\gamma_j)(\gamma_j -
\tilde\gamma_{j-1})}{(\gamma_{j-1} - \tilde\gamma_{j-1})(\gamma_j -
\tilde\gamma_j)} = \mu.\]
\end{definition}

\begin{lemma}
\label{lem:moebius-step}
Let $a, b, d \in \mathbb{CP}^1$ be in general position. For every $\mu
\in \mathbb{C}$, there exists a Möbius transformation depending on $a,
b$, and $\mu$ that maps $d$ to $c \in \mathbb{CP}^1$ such that $(a, b;
c, d) = \mu$.
\end{lemma}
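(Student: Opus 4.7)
The plan is to solve the cross-ratio equation algebraically for $c$ as a function of $d$ with $a, b, \mu$ as parameters, and then to read off the fractional linear structure of the resulting map. Writing the defining relation
\[
\frac{(a-c)(b-d)}{(a-d)(b-c)} = \mu
\]
explicitly, I would clear denominators to obtain $(a-c)(b-d) = \mu(a-d)(b-c)$, expand both sides, and collect the terms containing $c$. This produces an equation of the form $c \cdot P(a,b,d,\mu) = Q(a,b,d,\mu)$, where $P$ and $Q$ are explicit affine expressions in $d$. Solving for $c$ then yields
\[
c = \frac{(a - \mu b)\,d + ab(\mu - 1)}{(1 - \mu)\,d + (\mu a - b)},
\]
which, regarded as a function of $d$, is a Möbius transformation whose coefficients depend only on $a$, $b$, and $\mu$, as claimed.

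It remains to confirm non-degeneracy by computing the determinant of the coefficient matrix. A short calculation gives
\[
(a - \mu b)(\mu a - b) - ab(\mu - 1)(1 - \mu) = \mu\,(a - b)^{2}.
\]
Since $a, b, d$ are in general position (in particular $a \neq b$), this determinant is non-zero for every $\mu \in \mathbb{C} \setminus \{0\}$, so the map $d \mapsto c$ is a genuine Möbius transformation. The boundary case $\mu = 0$ is consistent with the formula: the cross-ratio equation then forces $c = a$, and the expression above does collapse to the constant map $d \mapsto a$, which may be regarded as a degenerate Möbius transformation.

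I do not anticipate any genuine obstacle here: the argument is essentially a direct computation. The only step requiring a little care is the bookkeeping when isolating $c$, together with the pleasing factorisation of the determinant as $\mu\,(a-b)^{2}$, which makes the generic non-degeneracy (and the nature of the one exceptional value of $\mu$) transparent.
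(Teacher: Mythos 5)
Your proof is correct and reaches exactly the same formula as the paper's, but by a different route. You solve the cross-ratio equation directly for $c$ by clearing denominators and collecting terms, then check non-degeneracy by factoring the determinant of the associated $2\times 2$ matrix as $\mu(a-b)^2$. The paper instead uses the classical normalization trick: it conjugates by $M\colon x \mapsto \frac{x-a}{x-b}$, which sends $a, b$ to $0, \infty$, reducing the cross-ratio condition to the trivial dilation $c' = \mu d'$, and then composes $M^{-1} \circ N \circ M$. The paper's route avoids any algebraic bookkeeping and makes the Möbius structure obvious from the outset; your route is more elementary, requires no invariance lemma, and yields as a byproduct the explicit determinant $\mu(a-b)^2$, which the paper never computes. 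That determinant gives you sharper information than the paper states: it pinpoints $\mu = 0$ (given $a\neq b$) as the sole degenerate value, where the map collapses to the constant $d\mapsto a$, and the paper's construction degenerates identically there (the factor $N\colon d'\mapsto 0$ is not invertible when $\mu=0$). Strictly speaking a constant map is not a Möbius transformation, so neither proof literally establishes the lemma at $\mu=0$; but since the application only ever needs $\mu\neq 0$ (otherwise $\tilde\gamma$ degenerates), this is a harmless boundary case that the paper also silently glosses over. In short: different method, same result, and your approach surfaces a useful non-degeneracy criterion the paper leaves implicit.
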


\begin{proof}
Consider the Möbius transformation \[M\colon x \mapsto \frac{x -
a}{x - b},\] which maps $a$, $b$, and $d$ to $0$, $\infty$, and $d'$
respectively.
The cross-ratio is invariant under Möbius transformations. Hence,
if we denote the image of $c$ under $M$ as $c'$, we want that
\[(0, \infty; c', d') = \frac{(0 - c')(\infty - d')}{(0 -
d')(\infty - c')} = \frac{c'}{d'} = \mu.\]
We define the Möbius transformations \[N\colon d' \mapsto c' = \mu d',
\quad M^{-1}\colon x' \mapsto \frac{b x' - a}{x' - 1}.\]
Then the Möbius transformation
\[M^{-1} \circ N \circ M\colon d \mapsto \frac{(\mu b - a) d - (\mu -
1) a b}{(\mu - 1) d + b - \mu a}\]
maps $d \in \mathbb{CP}^1$ to $c \in \mathbb{CP}^1$ such that $(a, b;
c, d) = \mu$.
\end{proof}

\noindent Note that $(M^{-1} \circ N \circ M)(a) = a$ and $(M^{-1}
\circ N \circ M)(b) = b$, independent of $\mu$.

\begin{lemma}
\label{lem:moebius-endpoint}
There exists a Möbius transformation depending on $\gamma_0$,
$\gamma_1$, $\dots$, $\gamma_n$, and $\mu$ that maps an initial point
$\tilde\gamma_0$ of a discrete Darboux transform of $\gamma$ with
parameter $\mu$ to the corresponding end point $\tilde\gamma_n$.
\end{lemma}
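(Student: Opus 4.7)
The plan is to iterate \autoref{lem:moebius-step} along the $n$ vertices of $\gamma$ and then invoke the fact that a composition of Möbius transformations is itself a Möbius transformation.

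First, for each $j \in \{1, 2, \dots, n\}$, I would apply \autoref{lem:moebius-step} with $a = \gamma_{j-1}$, $b = \gamma_j$, $d = \tilde\gamma_{j-1}$, and the prescribed parameter $\mu$. This produces a Möbius transformation $M_j$ with $M_j(\tilde\gamma_{j-1}) = \tilde\gamma_j$, since by construction the image satisfies $(\gamma_{j-1}, \gamma_j; \tilde\gamma_j, \tilde\gamma_{j-1}) = \mu$. The essential observation—already recorded in the explicit formula of the preceding lemma, namely
\[M_j\colon x \mapsto \frac{(\mu \gamma_j - \gamma_{j-1})\, x - (\mu - 1)\gamma_{j-1}\gamma_j}{(\mu - 1)\, x + \gamma_j - \mu \gamma_{j-1}}\,,\]
is that $M_j$ depends only on $\gamma_{j-1}$, $\gamma_j$, and $\mu$, and \emph{not} on the particular value $\tilde\gamma_{j-1}$ to which it is applied.

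Second, I would set $M := M_n \circ M_{n-1} \circ \cdots \circ M_1$. Since Möbius transformations form a group under composition, $M$ is again a Möbius transformation. By construction $M$ depends only on $\gamma_0, \gamma_1, \dots, \gamma_n$ and $\mu$, and telescoping the iteration yields
\[M(\tilde\gamma_0) = M_n\bigl(M_{n-1}(\cdots M_1(\tilde\gamma_0)\cdots)\bigr) = M_n(M_{n-1}(\cdots \tilde\gamma_1 \cdots)) = \tilde\gamma_n,\]
which is exactly the claim.

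Given \autoref{lem:moebius-step}, the proof is essentially routine. The only point that requires care—and that is already handled by the preceding lemma—is that each $M_j$ is genuinely independent of $\tilde\gamma_{j-1}$, so that the composition yields one transformation that does not depend on any of the intermediate points $\tilde\gamma_1, \dots, \tilde\gamma_{n-1}$ but only on the data $\gamma_0, \dots, \gamma_n, \mu$.
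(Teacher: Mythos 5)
Your proof is correct and follows exactly the same approach as the paper: apply \autoref{lem:moebius-step} to each edge to obtain Möbius transformations $M_j$ sending $\tilde\gamma_{j-1}$ to $\tilde\gamma_j$, then compose them into $M = M_n \circ \dots \circ M_1$. The explicit telescoping and the emphasis that each $M_j$ is independent of $\tilde\gamma_{j-1}$ are just slightly more detailed renditions of the same argument.
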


\begin{proof}
By \autoref{lem:moebius-step}, there exist Möbius transformations
$M_j$, $j = 1, 2, \dots, n$, depending on $\gamma_{j-1}$, $\gamma_j$,
and $\mu$ that map $\tilde\gamma_{j-1}$ to $\tilde\gamma_j$. Therefore
their composition $M_n \circ M_{n-1} \circ \dots \circ M_1$ is a Möbius
transformation depending on $\gamma_0$, $\gamma_1$, $\dots$, $\gamma_n$,
and $\mu$ that maps $\tilde\gamma_0$ to $\tilde\gamma_n$.
\end{proof}

\begin{remark}
A discrete Darboux transform $\tilde\gamma$ is closed if and only if
its initial point $\tilde\gamma_0$ is a fixed point of the Möbius
transformation of \autoref{lem:moebius-endpoint}.
The Möbius transformation of \autoref{lem:moebius-endpoint}
is of the form \[x \mapsto \frac{a x + b}{c x + d},\] where $a$, $b$,
$c$, and $d$ are polynomials in $\mu$ with complex coefficients depending
on $\gamma_0$, $\gamma_1$, $\dots$, $\gamma_n$.
Its fixed points are the roots of the equation \[(c x + d) x - (a x +
b) = c x^2 + (d - a) x - b = 0.\]
This equation is quadratic in $x$. Its degree in $\mu$ increases with
the number of points of $\gamma$.
Equivalently, in homogeneous coordinates, the fixed points are the
eigenvectors of matrix \[\begin{pmatrix}a & b\\c & d\end{pmatrix}.\]
\end{remark}

\begin{example}
\noindent As a simple but interesting enough example, consider the closed
discrete curve $\gamma$ spanned by the fifth roots of unity, \[\gamma_j =
\mathrm{e}^{2 \pi \mathrm{i} j/5}, \quad j = 0, 1, \dots, 5.\]
The relationship between $\mu$ and the initial point $\tilde\gamma_0$
of a closed discrete Darboux transform $\tilde\gamma$ of $\gamma$
is governed by the equation
\begin{equation}
\label{eq:pentagon}
\begin{split}
\left[\left(\left(-3 + \sqrt{5}\right) \mu^2 + 6 \mu - 3 - \sqrt{5}\right)
\tilde\gamma_0^2 + \left(\left(-2 - 4 \sqrt{5}\right) \mu + 1 +
\sqrt{5}\right) \tilde\gamma_0\right.\\\left.+ \left(-3 + \sqrt{5}\right)
\mu^2 + 6 \mu - 3 - \sqrt{5} \right] (1 - \mu) = 0.
\end{split}
\end{equation}

\noindent Equation~\eqref{eq:pentagon} is quadratic in $\tilde\gamma_0$,
cubic in $\mu$, and has total degree $5$.
For almost every value of $\mu$, exactly two
values of $\tilde\gamma_0$ satisfy the equation. The only exceptions
are $\mu = 1$, where all values of $\tilde\gamma_0$ satisfy the equation,
and $\mu = \frac{3 + \sqrt{5}}{8}$ and $\mu = \infty$, which are
ramification points of $\tilde\gamma_0$, i.e.\ points where there is
only one value of $\tilde\gamma_0$, which is a root of multiplicity $2$
of~\eqref{eq:pentagon}.

The discrete Darboux transform $\tilde\gamma$ of $\gamma$ with initial
point $\tilde\gamma_0 = \gamma_1$ and parameter $\mu = 0$ is identical
to $\gamma$ up to a rotation by $2 \pi / 5$, i.e.\ \[\tilde\gamma_{j-1} =
\mathrm{e}^{2 \pi \mathrm{i} / 5} \cdot \gamma_{j-1} = \gamma_j\] for
all $j = 0, 1, \dots, n$. Particularly, the discrete curve $\tilde\gamma$
is closed.

We would like to examine how the closed Darboux transform $\tilde\gamma$
behaves when $\mu$ makes two full anticlockwise turns around the
ramification point $\frac{3 + \sqrt{5}}{8}$ on a circle through the
origin centred at $\left(\frac{3 + \sqrt{5}}{8}\right)/2 + \frac{1}{1000}$.

\begin{figure}
\centering
\setcounter{subfigure}{0}
\begin{subfigure}[c]{0.49\textwidth}
\includegraphics{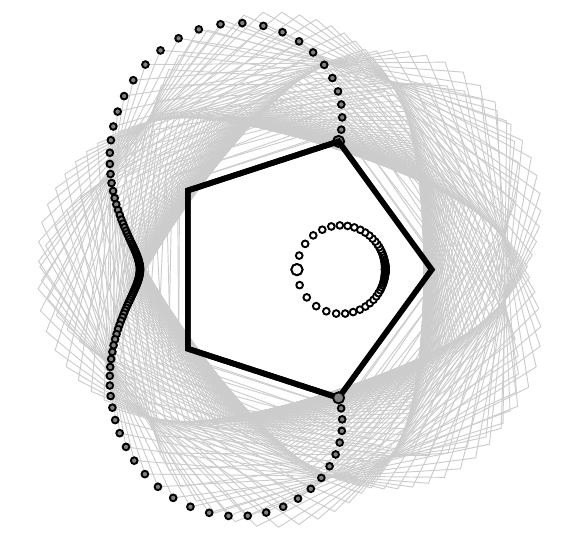}
\end{subfigure}
\begin{subfigure}[c]{0.49\textwidth}
\includegraphics{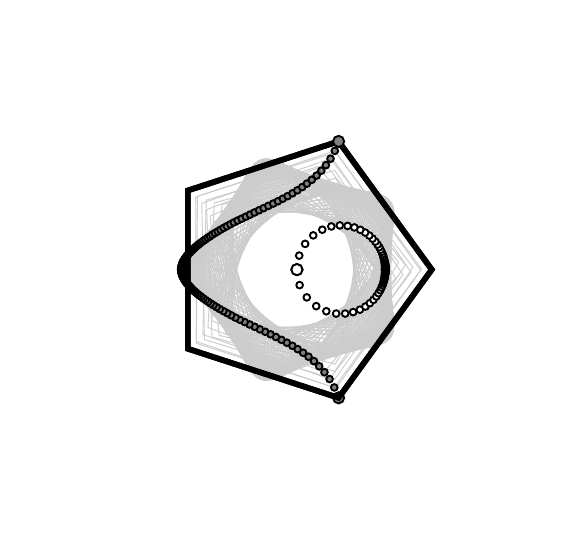}
\end{subfigure}
\caption{continuous deformation of a closed discrete Darboux transform}
\label{fig:pentagon-deformation}
\end{figure}

\autoref{fig:pentagon-deformation} attempts to illustrate the
behaviour of the closed Darboux transform $\tilde\gamma$ under the
aforementioned motion of $\mu$. (It is notoriously difficult to reproduce
dynamic behaviour on paper. A video of the experiment is available in
the supplementary material for this article.)
We can read \autoref{fig:pentagon-deformation} in two different
ways:

Firstly, the left image shows the movement of $\tilde\gamma_0$
(grey points) as $\mu$ (white points) completes one full circle. Then
the right image shows the movement of $\tilde\gamma_0$ (grey points)
as $\mu$ (white points) completes another full circle.
The position of the Darboux transform (black) after one turn of $\mu$
is identical to the initial position up to rotation. The final position
of the Darboux transform after the second turn is absolutely identical
to the initial position.

Secondly, the left image shows the movement of one choice of
$\tilde\gamma_0$ such that $\tilde\gamma$ is closed as $\mu$
completes one full circle. The right image shows how the other choice
of $\tilde\gamma_0$ such that $\tilde\gamma$ is closed moves at the
same time.
After one turn of $\mu$ we reach the initial position up to interchanged
choices of $\tilde\gamma_0$. (In the left image, $\tilde\gamma_0$ moves
from $\gamma_1$ to $\gamma_4$ while in the right image, $\tilde\gamma_0$
moves from $\gamma_4$ to $\gamma_1$.) After another turn of $\mu$,
the two choices of $\tilde\gamma_0$ reach their initial positions again.

Following \autoref{rem:analytic-continuation}, the steps
of $\mu$ are chosen according to the epsilon-delta bound
of \autoref{thm:epsilon-delta-bound} for~\eqref{eq:pentagon}
with $\varepsilon$ half the distance between the two choices of
$\tilde\gamma_0$. As we expect, the closer $\mu$ approaches the
singularity the smaller the steps become. At its rightmost position,
$\mu$ is only $\frac{1}{1000}$ away from the singularity. It takes
$127$ steps until $\mu$ completes one full circle.
\end{example}

\begin{remark}
If we want to prevent jumps, $\mu$ and $\tilde\gamma_0$ cannot
both be freely movable, i.e.\ we cannot let $\mu$ and $\tilde\gamma_0$
interchange their roles as movable and dependent point. Otherwise,
we can force a jump as follows:
We move parameter $\mu$ to $\mu = 1$. At the same time, according
to~\eqref{eq:pentagon}, the two possible initial points of $\tilde\gamma$
move to the origin and the point at infinity, respectively.
Without loss of generality, we assume that $\tilde\gamma_0$ moved to
the origin. Note that $\mu = 1$ describes an irreducible component
of the plane algebraic curve~\eqref{eq:pentagon}. This means that if
we remove $\tilde\gamma_0$ from the origin, $\mu$ will simply rest at
$\mu = 1$. Then we cannot move $\mu$ without a jump of $\tilde\gamma_0$
because in order to move continuously, $\tilde\gamma_0$ would initially
have to be arbitrarily close to the origin (or the point at infinity).
\end{remark}

\begin{remark}
Floating point arithmetic introduces rounding errors into the
computation of $\tilde\gamma_0$. This has a peculiar effect: If
$\tilde\gamma$ is closed, we know that $\tilde\gamma_0$ must be one
of the two fixed points of the Möbius transformation $M$ that maps
the initial point $\tilde\gamma_0$ of $\tilde\gamma$ to its last point
$\tilde\gamma_n$.
In general, a Möbius transformation has one attracting and one repelling
fixed point. When the fixed point is repelling, any numerical error in
its position is amplified by Möbius transformation $M$. Therefore, the
closed Darboux transform may (numerically) no longer be closed when
computed naively.
We have observed (see~\autoref{fig:pentagon-deformation}) that we can
move from one choice of $\tilde\gamma_0$ to the other by moving $\mu$
around a ramification point.
The natural domain for the map $c\colon \mu \mapsto \tilde\gamma_0$ is
a Riemann surface. Different choices of $\tilde\gamma_0$ correspond to
different branches of the Riemann surface. When we compute the vertices of
$\tilde\gamma$, we step by step compute $M \circ c$ = $M_n \circ M_{n-1}
\circ \dots \circ M_1 \circ c$. Note that function $M \circ c$ is an
example of a function on a Riemann surface that is numerically stable
on one branch and numerically unstable on the other.

Luckily, we can stabilize the computation by considering the inverse
Möbius transformation $M^{-1}$. A repelling fixed point of a Möbius
transformation is an attracting fixed point of its inverse. We can step by
step compute $M^{-1} \circ c = M_1^{-1} \circ M_2^{-1} \circ \dots \circ
M_n^{-1} \circ c$ to obtain the vertices of $\tilde\gamma$ in reversed
order. Since the cross-ratio of $A, B, C, D$ satisfies $(A,B;C,D) =
(B,A;D,C)$, we need only change our algorithm very little in order to
obtain $M^{-1} = M_1 \circ M_2 \circ \dots \circ M_n$ instead of $M =
M_n \circ M_{n-1} \circ \dots \circ M_1$; we only need to reverse the
order of the vertices of $\tilde\gamma$ before we compute the Möbius
transformations.

Besides, we can determine whether $\tilde\gamma_0$ approximates an
attracting fixed point of $M$ by considering the derivative of $M$ at
$\tilde\gamma_0$. The fixed point near $\tilde\gamma_0$ is attracting
if the absolute value of the derivative is smaller than $1$.
\end{remark}

\section[Towards triangular systems of polynomials]{Towards homotopy
continuation of triangular systems of polynomials}
\label{sec:towards-triangular-systems}

In this section, we discuss a scheme for certified homotopy continuation
of triangular systems of polynomials. A general implementation has
remained elusive so far. However, we later follow the same scheme when
we derive an algorithm for certified homotopy continuation of systems
of plane algebraic curves (\autoref{alg:plane-curves-system}).

\begin{problem}
\label{prob:triangular-system}
Consider a triangular system of polynomials, without loss of generality
\begin{equation}
\label{eq:triangular-system}
\begin{gathered}
p_1(x_0, x_1) = 0,\\
p_2(x_0, x_1, x_2) = 0,\\
\vdots\\
p_n(x_0, x_1, \dots, x_n) = 0.
\end{gathered}
\end{equation}
Let $x_0(0)$, $x_1(0)$, $\dots$, $x_n(0)$ be initial values that satisfy
the system of equations and let $x_0(1)$ be a target value for variable
$x_0$, i.e.\ a value to which $x_0$ should move continuously.
We define function $x_0(t)$ as a parameterization of the segment between
$x_0(0)$ and $x_0(1)$, \[x_0(t) = (1 - t) x_0(0) + t x_0(1).\]
By analytic continuation w.r.t.\ $t \in \left[0,1\right]$ we can (unless
there are singularities on the curves along which we perform analytic
continuation) step by step define holomorphic functions $x_1(t)$,
$x_2(t)$, $\dots$, $x_n(t)$. For example, we obtain $x_1(t)$ from
$p_1(x_0(t), x_1(t)) = 0$, then $x_2(t)$ from $p_2(x_0(t), x_1(t),
x_2(t)) = 0$, etc.

Compute the target values $x_1(1)$, $x_2(1)$, $\dots$, $x_n(1)$ from the
given polynomial system, all initial values and the first target value.
\end{problem}

\begin{remark}
\label{rem:fundamental-difficulty}
Any algorithm for this problem has to face the following fundamental
difficulty: Among all paths $x_j(t)$ along which we perform analytic
continuation to define the next path $x_k(t)$, generally only $x_0(t)$
is linear.
The other paths $x_1(t)$, $x_2(t)$, $\dots$, $x_n(t)$ are almost always
curvilinear---and unknown. We can at best evaluate $x_1(t)$, $x_2(t)$,
$\dots$, $x_n(t)$ at finitely many discrete points in time and interpolate
between the sample points. However, we must make sure that the approximate
paths we obtain by discretization remain close enough to the actual paths
such that they yield the same result w.r.t.\ analytic continuation. In
particular, we must make sure that in every step no singularities lie
between approximate and actual path. To make things worse, this includes
singularities of variables that occur only in later equations, whose
position in time may change depending on how we approximate the current
step.

One way to attack this difficulty is to eliminate $x_1$, $x_2$, $\dots$,
$x_{n - 1}$ from the polynomial system~\eqref{eq:plane-curves-system},
e.g.\ using resultants. However, this approach is expensive and suffers
from exponential expression swell. The resulting polynomial equation in
$x_0$, $x_n$ very likely has a high total degree, huge coefficients,
and many (artificial) critical points.
This means that we can in principle apply the
method for analytic continuation of plane algebraic curves
of~\autoref{alg:analytic-continuation} but that in practice it will
often be too expensive (see~\autoref{ex:artificial-singularity}). If
elimination introduces artificial critical points on the path of $x_0$,
\autoref{alg:analytic-continuation} does not even terminate.
\end{remark}

\noindent Instead we pursue the following idea:

\begin{remark}[General scheme for homotopy continuation of triangular systems]
We perform homotopy continuation of one equation after another,
interpolating linearly between sample points (using a time parameter in
the unit interval).
In order to obtain sample points on the actual paths of the variables,
we synchronize the time step. This means that we let all variables make
time steps of the same size.
We determine a step width such that analytic continuation by proximity
is possible (as in \autoref{rem:analytic-continuation}), and such
that the linearly interpolated paths between consecutive sample points
are equivalent to the actual paths of the variables w.r.t.\ analytic
continuation.
To fulfil the latter requirement, the step width must be small enough
such that there are no singularities between linearly interpolated paths
and actual paths.
We cannot foresee whether linear paths and actual paths enclose
singularities of variables that occur only in later equations. We must
determine whether this is the case when we later analytically continue the
respective variable. Should we find that we have `caught' a singularity,
we start over with a smaller step width.
Unless there are singularities on the actual paths of variables,
there is a small neighbourhood around the actual paths that is free of
singularities. Eventually, after finitely many reductions of step size,
the linear paths approximate the actual paths of the variables well
enough such that we do not encounter singularities anymore. Then we can
make one synchronized time step with all variables.
We proceed until we reach time $t = 1$.
\end{remark}

\section[Homotopy continuation of systems of plane algebraic
curves]{Certified homotopy continuation of systems of plane algebraic
curves}
\label{sec:plane-curves-system}

\noindent In full generality, it may be very difficult to decide
whether or not there are singularities between linearly interpolated
paths and actual paths.
(Among other things, we may want to ensure that the $(k-1)$-dimensional
discriminant locus of variable $x_k$ w.r.t.\ equation $p_k(x_0, x_1,
\dots, x_k) = 0$ does not intersect the polydisc around the last sample
point with radii lengths of the linear paths.)
Therefore, we restrict ourselves to systems of plane algebraic curves,
a special case of~\autoref{prob:triangular-system}.

\begin{problem}
\label{prob:plane-curves-system}
Consider a system of bivariate polynomials, without loss of generality
\begin{equation}
\label{eq:plane-curves-system}
\begin{gathered}
p_1(x_0, x_1) = 0,\\
p_2(x_1, x_2) = 0,\\
\vdots\\
p_n(x_{n-1}, x_n) = 0.
\end{gathered}
\end{equation}
Let $x_0(0)$, $x_1(0)$, $\dots$, $x_n(0)$ be initial values that satisfy
the system of equations and let $x_0(1)$ be a target value.
We define function $x_0(t)$ as a parameterization of the segment between
$x_0(0)$ and $x_0(1)$, \[x_0(t) = (1 - t) x_0(0) + t x_0(1).\]
By analytic continuation w.r.t.\ $t \in \left[0,1\right]$ we can (unless
there are singularities on the curves along which we perform analytic
continuation) step by step define holomorphic functions $x_1(t)$,
$x_2(t)$, $\dots$, $x_n(t)$. For example, we obtain $x_1(t)$ from
$p_1(x_0(t), x_1(t)) = 0$, then $x_2(t)$ from $p_2(x_1(t), x_2(t)) =
0$, etc.

Compute the target values $x_1(1)$, $x_2(1)$, $\dots$, $x_n(1)$ from the
given polynomial system, all initial values and the first target value.
\end{problem}

\noindent Before we describe an algorithm
for~\autoref{prob:plane-curves-system}, we need the following lemma.

\begin{lemma}
\label{lem:range-estimate}
Let $\mathcal{C}\colon f(x,y) = 0$, $x_1 \in \mathbb{C}$ be defined
as in \autoref{thm:epsilon-delta-bound}. Let $\varepsilon > 0$. Suppose
that we have determined $\delta > 0$ by the epsilon-delta bound of
\autoref{thm:epsilon-delta-bound} such that \[|y_j(x_1) - y_j(x_2)| <
\varepsilon\] for all holomorphic functions $y_j(x)$ that satisfy $f(x,
y_j(x)) = 0$ in a neighbourhood of $x_1$ and for all $x_2$ with $|x_1 -
x_2| < \delta$.

\noindent Then for all $x_2$ with $\delta' = |x_1 - x_2| < \delta$,
\[\varepsilon' = \frac{\delta'}{\delta} \cdot \varepsilon < \varepsilon\]
satisfies \[|y_j(x_1) - y_j(x_2)| < \varepsilon'.\]
\end{lemma}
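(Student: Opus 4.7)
The plan is to revisit the proof of Theorem 2.1, extract from it the explicit pointwise upper bound on $|y_j(x_1) - y_j(x_2)|$ as a function of the step size, and then observe that this bound is ``superlinear'' in the sense that $g(r)/r$ is monotone---from which the lemma drops out essentially for free.

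First, I would recall that the proof of Theorem 2.1 in fact establishes the quantitative pointwise bound $|y_j(x_1) - y_j(x_2)| \leq g(|x_2-x_1|)$ with $g(r) := Mr^2/(\rho(\rho-r)) + Yr$, valid uniformly in $j$ for every $x_2$ with $|x_2-x_1| < \rho$. Moreover, the value $\delta$ produced by the epsilon-delta bound is precisely the positive root of $g(\delta) = \varepsilon$ (this is what drops out of the quadratic inequality~\eqref{eq:sufficient-bound-plugging-in} after clearing denominators).

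The key step is then to inspect the ratio $h(r) := g(r)/r = Mr/(\rho(\rho-r)) + Y$. A short differentiation gives $h'(r) = M/(\rho-r)^2 \geq 0$, so $h$ is monotonically non-decreasing on $[0,\rho)$. Applied at $\delta' < \delta$, this yields $g(\delta')/\delta' = h(\delta') \leq h(\delta) = \varepsilon/\delta$, which rearranges to $g(\delta') \leq (\delta'/\delta)\varepsilon = \varepsilon'$. Combining with the pointwise bound gives $|y_j(x_1) - y_j(x_2)| \leq g(\delta') \leq \varepsilon'$.

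The main (minor) obstacle is promoting this to the strict inequality asserted in the lemma. In the generic case $M > 0$, the monotonicity of $h$ is strict and $|y_j(x_1) - y_j(x_2)| < \varepsilon'$ follows at once. The degenerate case $M = 0$ would force $y_j$ to vanish on the circle of radius $\rho$ about $x_1$ and so is excluded in any setting where the lemma is to be applied; alternatively, strictness there can be recovered from the fact that the triangle inequality and the Cauchy estimate invoked in the proof of Theorem 2.1 are themselves typically strict.
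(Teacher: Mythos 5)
Your proof is correct, but it takes a genuinely different route from the paper's. The paper proves \autoref{lem:range-estimate} with a short complex-analytic argument: it normalizes $y_j$ to a self-map $f$ of the unit disk with $f(0)=0$ and invokes the Schwarz lemma $|f(z)| \le |z|$, which requires no information about how $\delta$ was produced beyond the hypothesis $|y_j(x_1)-y_j(x_2)| < \varepsilon$ on the $\delta$-disk together with holomorphicity. Your argument instead reopens the proof of \autoref{thm:epsilon-delta-bound}, extracts the explicit majorant $g(r) = Mr^2/(\rho(\rho-r)) + Yr$ with $g(\delta) = \varepsilon$, and derives the conclusion from the monotonicity of $h(r) = g(r)/r$ on $[0,\rho)$ (your computation $h'(r) = M/(\rho-r)^2 \ge 0$ is correct). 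This is more elementary and self-contained, at the cost of depending on the particular algebraic form of the bound. It is worth noting that your $g(\delta')$ is precisely the quantity that \autoref{rem:delta-epsilon-bound} offers as an ``alternative'' estimate for $\varepsilon'$; your monotonicity argument therefore does a bit more than prove the lemma --- it shows that the estimate of \autoref{rem:delta-epsilon-bound} is always at least as sharp as the Schwarz-lemma estimate $(\delta'/\delta)\varepsilon$, a comparison the paper leaves implicit. Regarding strictness: both your proof and the paper's really deliver a non-strict inequality in the borderline case (for the paper, the equality case of the Schwarz lemma; for you, the case $M=0$), so your remark on this is fair and applies equally to the original.
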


\noindent This means that we can find a better estimate for the range of
$y_j(x)$ w.r.t.\ an actual feasible movement of $x$ from $x_1$ to $x_2$.

\begin{proof}
Under the assumptions of~\autoref{lem:range-estimate},
\[f(x) = \frac{y_j(\delta x + x_1) - y_j(x_1)}{\varepsilon}\]
is a holomorphic function from the open unit disk to the open unit disk.
By the maximum modulus principle, we know that there exists a point on
the boundary of the disk of radius $\delta'$ around $x_1$ where $|y_j(x)
- y_j(x_1)|$ is greater or equal than at any point $x$ with $|x - x_1|
< \delta'$.
Hence, there exists a point on the boundary of the disk of radius
$\frac{\delta'}{\delta}$ around the origin where $|f(x)|$ is greater or
equal than at any point $x$ with $|x| < \frac{\delta'}{\delta}$.
Schwarz lemma states that \[|f(x)| \leq |x|\] for all $x$ in the open
unit disk. Therefore
\[\frac{\varepsilon'}{\varepsilon} = \max_{t \in \left[0,1\right]}
\left|f\left(\frac{\delta'}{\delta} \cdot \mathrm{e}^{2 \pi \mathrm{i}
t}\right)\right| \leq \max_{t \in \left[0,1\right]}
\left|\frac{\delta'}{\delta} \cdot \mathrm{e}^{2 \pi \mathrm{i} t}\right|
= \frac{\delta'}{\delta},\]
and thus \[\varepsilon' \leq \frac{\delta'}{\delta} \cdot \varepsilon\]
for all $x_2$ with $|x_1 - x_2| < \delta' < \delta$.
\end{proof}

\begin{remark}
\label{rem:delta-epsilon-bound}
Alternatively, if we plug in $\delta = \delta'$ and $\varepsilon = \varepsilon'$
into~\eqref{eq:epsilon-delta-bound} and solve for $\varepsilon'$, we obtain
\[\varepsilon' = \delta' \left(\tilde{y} + \frac{M \delta'}{\rho (\rho -
\delta')}\right) < \varepsilon,\]
with $M$, $\rho$, $\tilde{y}$ as in the proof of
\autoref{thm:epsilon-delta-bound}.
This yields another better estimate for the range of $y_j(x)$
w.r.t.\ an actual feasible movement of $x$ from $x_1$ to $x_2$.
\end{remark}

\begin{algorithm}
\label{alg:plane-curves-system}
Consider the system of bivariate polynomials of
\autoref{prob:plane-curves-system} with initial values $x_0(0)$, $x_1(0)$,
$\dots$, $x_n(0)$ and a target value $x_0(1)$.

\begin{enumerate}
\item Define $x_0(t) = (1 - t) x_0(0) + t x_0(1)$.
\item Let $T = 1$.
\item Let $\varepsilon_0' = |x_0(0) - x_0(T)|$.
\item For all $k = 1, 2, \dots, n$:
\begin{enumerate}
\item Let $\varepsilon_k$ be half the minimum distance between the $x_k$
with \[p_k(x_{k-1}(0), x_k) = 0.\]
\item Compute $\delta_k$ according to the epsilon-delta bound of
\autoref{thm:epsilon-delta-bound} for $f(x, y) = p_k(x_{k-1}, x_k)$,
$x_1 = x_{k-1}(0)$ and $\varepsilon = \varepsilon_k$.
\item If $\delta_k < \varepsilon_{k-1}'$ then let $T = T / 2$ and go to 3.
\item Let $x_k(T)$ be the $x_k$ with $p_k(x_{k-1}(T), x_k) = 0$ closest
to $x_k(0)$.
\item Let $\delta_k' = |x_{k-1}(0) - x_{k-1}(T)|$.
\item Let $\varepsilon_k' = (\delta_k' + \epsilon) / \delta_k \cdot
\varepsilon_k$
with $\epsilon > 0$.
\end{enumerate}
\item If $T = 1$ then output $x_1(T), x_2(T), \dots, x_n(T)$ and stop.
\item Let $x_0(0) = x_0(T)$, $x_1(0) = x_1(T)$, $\dots$, $x_n(0) =
x_n(T)$ and go to 1.
\end{enumerate}
\end{algorithm}

\begin{theorem}
If the target values $x_1(1)$, $x_2(1)$, $\dots$, $x_n(1)$
of \autoref{prob:plane-curves-system} are well-defined,
\autoref{alg:plane-curves-system} computes them in finitely many steps.
\end{theorem}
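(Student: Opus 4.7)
The plan is to verify two things: (i) whenever the algorithm reaches step~5 with $T = 1$, the values $x_1(1), \dots, x_n(1)$ are the correct analytic continuations; and (ii) this happens after finitely many passes of the outer loop. I begin with correctness, arguing by induction on $k$ that after a successful execution of steps~4(a)--(f) with step size $T$ the quantity $\varepsilon_k'$ uniformly bounds $|x_k(0) - x_k(t)|$ for all $t \in [0, T]$ and that the choice of $x_k(T)$ made by proximity in step~4(d) is the correct branch. The base case $k = 0$ is immediate because $x_0$ moves on a straight segment. For the inductive step, the induction hypothesis and the test passed in step~4(c) together give $|x_{k-1}(0) - x_{k-1}(t)| \leq \varepsilon_{k-1}' \leq \delta_k$ for every $t \in [0, T]$; hence the sub-path $x_{k-1}(t)$ stays inside the $\delta_k$-disk around $x_{k-1}(0)$, and \autoref{thm:epsilon-delta-bound} together with \autoref{rem:analytic-continuation} force the analytic continuation of $x_k$ to differ from $x_k(0)$ by less than $\varepsilon_k$, so the closest root at $x_{k-1}(T)$ is indeed the correct one. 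Applying \autoref{lem:range-estimate} with $\delta' = |x_{k-1}(0) - x_{k-1}(T)|$ then supplies the next range bound $\varepsilon_k'$ and closes the induction.

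For termination of the inner bisection, fix one pass of the outer loop with starting values $a_0, \dots, a_n$. The quantities $\delta_k$ and $\varepsilon_k$ depend only on these values and are independent of $T$, whereas $\varepsilon_0' = T \cdot |x_0(0) - x_0(1)|$ tends to $0$ as $T \to 0$, and the recurrence of step~4(f) propagates this: $\varepsilon_k' \to 0$ as $T \to 0$, by induction on $k$. Hence after finitely many halvings every test $\delta_k \geq \varepsilon_{k-1}'$ in step~4(c) is satisfied and the inner loop completes.

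For termination of the outer loop, well-definedness of the target values forces the whole trajectory $(x_0(t), x_1(t), \dots, x_n(t))$, $t \in [0, 1]$, to be a compact curve in $\mathbb{C}^{n+1}$ staying bounded away from the zero sets of the leading coefficients and of the discriminants of $p_1, \dots, p_n$. By the explicit formula~\eqref{eq:epsilon-delta-bound} together with \autoref{lem:fujiwara} and \autoref{lem:trigonometric}, the quantities $\delta_k, \varepsilon_k$ depend continuously on the current starting point, and compactness yields a uniform positive lower bound $T_\ast$ on the step size at which the inner loop is guaranteed to succeed at every starting point along the trajectory. Since the algorithm halves $T$ only on failure, the accepted step size at each outer pass is at least $T_\ast / 2$, so the outstanding fraction of the original time interval is multiplied by at most $1 - T_\ast/2$ per pass and reaches zero in at most $\lceil 2/T_\ast \rceil$ passes, at which point step~5 outputs the answer.

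The main obstacle is making the uniform bound $T_\ast$ rigorous. Beyond the continuity-and-compactness bounds on $\delta_k$ and $\varepsilon_k$ themselves, one has to control the composition of the $n$ per-level inflation factors $\varepsilon_k/\delta_k$ uniformly along the path, and to verify that the bound $\varepsilon_k'$ supplied by \autoref{lem:range-estimate} at the endpoint $T$ really controls every intermediate $t \in [0, T]$ along the unknown, curvilinear sub-path of $x_{k-1}$; the latter is a Schwarz-lemma monotonicity statement implicit in the proof of \autoref{lem:range-estimate}. Both are routine but fiddly continuity-and-compactness computations.
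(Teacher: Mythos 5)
Your proof skeleton matches the paper's own in structure---a step-by-step commentary on the algorithm, with correctness of the inner step resting on \autoref{thm:epsilon-delta-bound}, \autoref{rem:analytic-continuation}, and \autoref{lem:range-estimate}, and termination derived from the absence of singularities near the actual paths---but you make explicit two things the paper only asserts in a single sentence: the induction on $k$ that propagates the range bounds $\varepsilon_k'$, and the compactness argument yielding a uniform lower bound $T_\ast$ on the step size at which a pass must succeed. In that respect yours is the more careful write-up, and naming the remaining obstacles is the right instinct.

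Two of those obstacles, however, do not close the way you suggest, and both are worth pinning down. First, in step~4(f) the algorithm sets $\varepsilon_k' = (\delta_k' + \epsilon)/\delta_k \cdot \varepsilon_k$ with a fixed slack $\epsilon > 0$, so as $T \to 0$ one has $\varepsilon_k' \to \epsilon\,\varepsilon_k/\delta_k > 0$, not $\varepsilon_k' \to 0$ as you claim; the test $\delta_{k+1} \geq \varepsilon_k'$ in step~4(c) can therefore fail for every $T$ unless $\epsilon$ is taken smaller than $\delta_k\delta_{k+1}/\varepsilon_k$ for all $k$, a constraint nowhere imposed. Second, the ``Schwarz-lemma monotonicity'' you invoke is not the missing step for controlling intermediate $t$: the algorithm feeds the \emph{endpoint} distance $\delta_k' = |x_{k-1}(0) - x_{k-1}(T)|$ into \autoref{lem:range-estimate}, and since the induced sub-paths $x_{k-1}(t)$ are curvilinear and in general not distance non-decreasing, the excursion $\max_{t\in[0,T]}|x_{k-1}(0)-x_{k-1}(t)|$ can strictly exceed $\delta_k'$, so $\varepsilon_k'$ need not bound the range of $x_k$ and your inductive hypothesis does not propagate. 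Replacing $\delta_k'$ by $\varepsilon_{k-1}'$ (the range bound you actually have from the previous step) closes the induction and simultaneously repairs the bisection-termination argument, since the resulting recurrence $\varepsilon_k' = \varepsilon_{k-1}'\cdot\varepsilon_k/\delta_k$ does tend to zero with $\varepsilon_0'$. Both defects are present in the paper's algorithm and its own (very terse) proof; they are not of your making, but they should be flagged as errors rather than as routine continuity bookkeeping.
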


\begin{proof}
The first two steps of~\autoref{alg:plane-curves-system} are
initialization steps.
In step~1, we define a linear homotopy between initial value $x_0(0)$
and final value $x_0(1)$ of $x_0$.
We first want to test whether we can perform analytic continuation of
the system in a single time step. Therefore, in step~2, we set target
time $T = 1$.

Steps~3--6 form the main loop of our algorithm. They are repeated until
we reach time $T = 1$, in which case step~5 terminates the algorithm.

In step~3, we estimate the range of $x_0$ as it runs from its initial
position $x_0(0)$ to its target position $x_0(T)$. Since $x_0(t)$ is
linear by definition (step~1), our estimate $\varepsilon_0' = |x_0(0)
- x_0(T)|$ is exact.

Step~4 is the inner loop of our algorithm, in which we try to perform
analytic continuation equation by equation of our system. Run variable $k$
denotes the index of the equation $p_k(x_{k-1}, x_k)$ under consideration.

In steps~4a--4b, we use the epsilon-delta
bound of~\autoref{thm:epsilon-delta-bound} and
\autoref{rem:analytic-continuation} to compute a feasible step width
$\delta_k$ for variable $x_{k-1}$. If $x_{k-1}$ moves at most $\delta_k$
then we can perform analytic continuation of $x_k$ w.r.t.\ $p_k(x_{k-1},
x_k) = 0$ by selecting as $x_k(T)$ the value of $x_k$ with
$p_k(x_{k-1}(T), x_k) = 0$ closest to $x_k(0)$.

Hence, in step~4c, we test whether feasible step $\delta_k$ is smaller than
an upper bound $\varepsilon_{k-1}'$ of the range of $x_{k-1}$ as it runs
from $x_{k-1}(0)$ to $x_{k-1}(T)$.

If $\delta_k < \varepsilon_{k-1}'$, we cannot be sure that there are no
singularities between the actual path of $x_{k-1}$ and the interpolated
path, i.e.\ the segment from $x_{k-1}(0)$ to $x_{k-1}(T)$. Our attempt
to reach target time $T$ in one step has failed. Therefore, we halve
target time $T$ and go back to step~3.

Otherwise, if $\delta_k \geq \varepsilon_{k-1}'$, the epsilon-delta
bound of \autoref{thm:epsilon-delta-bound} guarantees that actual path
and interpolated path of $x_{k-1}$ are equivalent w.r.t.\ analytic
continuation of $x_k$. Then in step~4d, we determine target value
$x_k(T)$. By construction, $x_k(T)$ is a point on the actual path
of $x_k$.

In steps~4e--4f, we use~\autoref{lem:range-estimate} to compute an upper
bound for the range of $x_k$ as it runs from $x_k(0)$ to $x_k(T)$. The
computation is independent of whether $x_{k-1}$ runs along actual or
interpolated path. The bound $\varepsilon_k'$ holds for both paths,
particularly also for analytic continuation of $x_k$ along the actual
path of $x_{k-1}$.

We then proceed with analytic continuation of the next variable, if any.
When we leave the inner loop (step~4), we obtain valid positions for $x_1,
x_2, \dots, x_n$ at target time $T$.
If $T = 1$, we output the solution and stop (step~5). Otherwise, we use
$x_0(T), x_1(T), \dots, x_n(T)$ as a valid initial position from which
we again try to reach target time $T = 1$ (step~6).

By the assumption that $x_1(1), x_2(1), \dots, x_n(1)$ are well-defined,
there are only finitely many singularities in a neighbourhood of the
actual paths of $x_0$, $x_1$, $\dots$, $x_n$. The algorithm terminates
after finitely many steps as eventually the interpolated paths of $x_1$,
$x_2$, $\dots$, $x_n$ approximate the actual paths well enough such that
we do not encounter singularities anymore.
\end{proof}

\section{Comparison with other approaches}

Let us discuss more examples, which allow us to compare the performance
of our algorithm with that of other approaches.
(The number of steps needed by~\autoref{alg:analytic-continuation}
and~\autoref{alg:plane-curves-system} stated below relate to an
experimental implementation in Haskell that is available in the
supplementary material for this article.)

\begin{example}[\ocite{HauensteinHaywoodLiddell2014}*{Section~7.1}]
Consider the Newton homotopy
\[H(x, t) = f (x) + v t\]
where $f (x) = x^2 - 1 - m$ and $v = m$ for various values of $m
> -1$. The goal is to analytically continue $x$ as $t$ moves from $1$
to $0$.

In~\autoref{tab:selected-m} and~\autoref{tab:selected-k}, we
compare the performance of \autoref{alg:analytic-continuation}
with that of the algorithms of~\ocite{BeltranLeykin2013}
and~\ocite{HauensteinHaywoodLiddell2014}, for various
values of $m$. The data for the latter algorithms is quoted
from~\cite{HauensteinHaywoodLiddell2014}*{Table~1 and Table~2}.

\begin{table}[ht]
\centering
\begin{tabular}{@{}cccc@{}} \toprule
$m$ & \begin{minipage}{0.25\linewidth}Number of steps\\
of~\autoref{alg:analytic-continuation}\end{minipage}
& \begin{minipage}{0.25\linewidth}Number of
a priori steps of~\ocite{BeltranLeykin2013}\end{minipage} &
\begin{minipage}{0.25\linewidth}Number of a posteriori certified
intervals of~\ocite{HauensteinHaywoodLiddell2014}\end{minipage}\\ \midrule
10    &  9 & 184 &  51\\
20    & 12 & 217 &  67\\
30    & 14 & 237 &  78\\
40    & 16 & 250 &  82\\
50    & 17 & 260 &  88\\
60    & 18 & 269 &  92\\
70    & 19 & 276 &  96\\
80    & 20 & 282 &  99\\
90    & 21 & 288 & 103\\
100   & 21 & 292 & 105\\
1000  & 41 & 395 & 162\\
2000  & 49 & 426 & 180\\
3000  & 54 & 446 & 191\\
4000  & 58 & 457 & 197\\
5000  & 62 & 468 & 204\\
10000 & 73 & 499 & 220\\
20000 & 87 & 530 & 238\\
30000 & 96 & 547 & 250\\ \bottomrule
\end{tabular}
\caption{Performance of~\autoref{alg:analytic-continuation} in comparison
with the algorithms of~\ocite{BeltranLeykin2013}
and~\ocite{HauensteinHaywoodLiddell2014}, for various values
of $m$. The data in the last two columns is quoted
from~\cite{HauensteinHaywoodLiddell2014}*{Table~1}.}
\label{tab:selected-m}
\end{table}
\begin{table}[!ht]
\centering
\begin{tabular}{@{}cccc@{}} \toprule
$k$ & \begin{minipage}{0.25\linewidth}Number of steps\\
of~\autoref{alg:analytic-continuation}\end{minipage}
& \begin{minipage}{0.25\linewidth}Number of a
priori steps of~\ocite{BeltranLeykin2013}\end{minipage} &
\begin{minipage}{0.25\linewidth}Number of a posteriori certified
intervals of~\ocite{HauensteinHaywoodLiddell2014}\end{minipage}\\ \midrule
 1 &  5 &  176 & 64\\
 2 &  9 &  287 & 68\\
 3 & 14 &  390 & 70\\
 4 & 18 &  492 & 71\\
 5 & 22 &  593 & 71\\
 6 & 27 &  695 & 71\\
 7 & 31 &  798 & 71\\
 8 & 36 &  901 & 71\\
 9 & 40 & 1003 & 71\\
10 & 44 & 1108 & 71\\ \bottomrule
\end{tabular}
\caption{Performance of~\autoref{alg:analytic-continuation} in comparison
with the algorithms of~\ocite{BeltranLeykin2013}
and~\ocite{HauensteinHaywoodLiddell2014}, for various values of $m =
-1 + 10^{-k}$. The data in the last two columns is quoted
from~\cite{HauensteinHaywoodLiddell2014}*{Table~2}.}
\label{tab:selected-k}
\end{table}

\noindent Both~\ocite{BeltranLeykin2013}
and~\ocite{HauensteinHaywoodLiddell2014} present algorithms designed
for certified homotopy continuation of arbitrary polynomial systems
whereas \autoref{alg:analytic-continuation} can only deal with plane
algebraic curves.
However, the example indicates that in the univariate case
\autoref{alg:analytic-continuation} may perform much better than those
more general algorithms, which do not exploit the special structure of
the univariate case.
\end{example}

\noindent Furthermore, let us elaborate
on~\autoref{rem:fundamental-difficulty}. The following example shows
that it may be better to apply~\autoref{alg:plane-curves-system} to
a system of plane algebraic curves than to eliminate variables and
apply~\autoref{alg:analytic-continuation} to the resultant.

\begin{example}
\label{ex:artificial-singularity}
Consider the system of bivariate polynomials
\begin{equation}
\label{eq:artificial-singularity}
\begin{aligned}
p_1(x_0, x_1) &= -4 + 2 x_0 + x_1 + 2 x_0 x_1 + x_1^2 = 0,\\
p_2(x_1, x_2) &= x_1^2 + x_2^3 = 0,
\end{aligned}
\end{equation}
with initial values
\[x_0(0) = 0, \quad x_1(0) = \frac{-1 - \sqrt{17}}{2}, \quad x_2(0) =
{\left(\frac{-9 - \sqrt{17}}{2}\right)}^{\frac{1}{3}},\]
and target value $x_0(1) = 1$.
The $x_1$-resultant of $p_1(x_0, x_1)$ and $p_2(x_1, x_2)$ is
\begin{equation}
\label{eq:artificial-singularity-resultant}
q(x_0, x_2) = 16 - 16 x_0 + 4 x_0^2 + 9 x_2^3 + 4 x_0^2 x_2^3 + x_2^6 = 0.
\end{equation}

\noindent Let us compare the performance of
\autoref{alg:plane-curves-system} for~\eqref{eq:artificial-singularity}
with the performance of \autoref{alg:analytic-continuation}
for~\eqref{eq:artificial-singularity-resultant} as $x_0$ moves linearly
(in the unit interval) from $0$ to $1$.
We find that \autoref{alg:plane-curves-system} subdivides once, i.e.\
it needs two steps. In contrast, \autoref{alg:analytic-continuation}
needs six steps.
One possible explanation is that $x_0 = -\frac{1}{2}$ is a singularity
of~\eqref{eq:artificial-singularity-resultant} but not
of~\eqref{eq:artificial-singularity}. For $x_0 =
-\frac{1}{2}$,~\eqref{eq:artificial-singularity-resultant} has three
zeros of multiplicity two, whereas~\eqref{eq:artificial-singularity}
has six simple roots. Each zero of multiplicity two
of~\eqref{eq:artificial-singularity-resultant} corresponds to two simple
zeros of~\eqref{eq:artificial-singularity}
with differing signs of $x_1$.

Generally, elimination introduces artificial singularities. Due to an
artificial singularity it can even happen that we cannot analytically
continue the resultant: For example, the $x_1$-resultant of
\begin{align*}
\tilde{p}_1(x_0, x_1) &= -4 + 2 x_0 + x_2 - 2 x_0 x_1 + x_1^2 = 0,\\
p_2(x_1, x_2) &= x_1^2 + x_2^3 = 0,
\end{align*}
has an artificial singularity at $x_0 = \frac{1}{2}$. In this
case, \autoref{alg:analytic-continuation} does not terminate whereas
\autoref{alg:plane-curves-system} produces the desired result.
\end{example}

\section{Conclusion}

From an epsilon-delta bound for plane algebraic curves
(\autoref{thm:epsilon-delta-bound}), we have derived algorithms
for certified homotopy continuation of plane algebraic curves
(\autoref{alg:analytic-continuation}) and systems of plane algebraic
curves (\autoref{alg:plane-curves-system}).
Our certificate is rigorous for exact real arithmetic. For floating point
arithmetic, \autoref{thm:epsilon-delta-bound} can be considered a soft
certificate.
Several examples demonstrate the practicability of our approach.

A generalization of \autoref{alg:plane-curves-system} to arbitrary systems
of polynomials might be an interesting challenge for further research.

\section*{Acknowledgements}

I thank Ulrich Bauer, Tim Hoffmann, Jürgen Richter-Gebert, Katharina
Schaar, and Martin von Gagern for their support in preparing this article.

\section*{Funding}

This research was supported by DFG Collaborative Research Center TRR 109,
``Discretization in Geometry and Dynamics''.

\begin{bibdiv}
\begin{biblist}
\bib{Ahlfors1979}{book}{
	title={Complex Analysis},
	%subtitle={An Introduction to the Theory of Analytic Functions
	%of One Complex Variable},
	author={Ahlfors, Lars Valerian},
	date={1979},
	edition={3},
	publisher={McGraw-Hill},
	address={Singapore}
}

\bib{AllgowerGeorg1990}{book}{
    title={Numerical Continuation Methods},
    subtitle={An Introduction},
    author={Allgower, Eugene L.},
    author={Georg, Kurt},
    series={Springer Series in Computational Mathematics},
    volume={13},
    publisher={Springer},
    address={Berlin},
    date={1990},
    doi={10.1007/978-3-642-61257-2}
}

\bib{BeltranLeykin2012}{article}{
    title={Certified Numerical Homotopy Tracking},
    author={Beltrán, Carlos},
    author={Leykin, Anton},
    journal={Experimental Mathematics},
    volume={21},
    number={1},
    pages={69--83},
    year={2012},
    doi={10.1080/10586458.2011.606184}
}

\bib{BeltranLeykin2013}{article}{
    title={Robust certified numerical homotopy tracking},
    author={Beltrán, Carlos},
    author={Leykin, Anton},
    journal={Foundations of Computational Mathematics},
    volume={13},
    number={2},
    pages={253--295},
    year={2013},
    doi={10.1007/s10208-013-9143-2}
}

\bib{Fujiwara1916}{article}{
	title={Über die obere Schranke des absoluten Betrages der Wurzeln einer
	algebraischen Gleichung},
	author={Fujiwara, Matsusaburô},
	journal={Tohoku Mathematical Journal, First Series},
	volume={10},
	pages={167--171},
	date={1916}
}

\bib{HauensteinHaywoodLiddell2014}{article}{
    author={Hauenstein, Jonathan D.},
    author={Haywood, Ian},
    author={Liddell, Alan C., Jr.},
    title={An a posteriori certification algorithm for Newton homotopies},
    book={
        title={Proceedings of the 39th International Symposium on Symbolic
        and Algebraic Computation},
        publisher={ACM},
        address={New York, NY, USA}
    },
    conference={
        title={ISSAC '14},
        date={2014},
        address={Kobe, Japan}
    },  
    pages={248--255},
    doi={10.1145/2608628.2608651},
    date={2014}
}

\bib{HauensteinSottile2012}{article}{
    author={Hauenstein, Jonathan D.},
    author={Sottile, Frank},
    title={Algorithm 921: alphaCertified},
    subtitle={Certifying Solutions to Polynomial Systems},
    journal={ACM Transactions on Mathematical Software},
    volume={38},
    number={4},
    pages={28:1--28:20},
    doi={10.1145/2331130.2331136},
    date={2012}
}

\bib{Hoffmann2009}{book}{
    title={Discrete Differential Geometry of Curves and Surfaces},
    author={Hoffmann, Tim},
    series={MI Lecture Notes},
    volume={18},
    publisher={Faculty of Mathematics, Kyushu University},
    address={Japan},
    date={2009}
}

\bib{Kortenkamp1999}{thesis}{
    title={Foundations of Dynamic Geometry},
    author={Kortenkamp, Ulrich},
    type={dissertation},
    organization={ETH Zürich},
    date={1999},
    address={Zurich}
}

\bib{KortenkampRichterGebert2001}{article}{
	title={Grundlagen dynamischer Geometrie},
    author={Kortenkamp, Ulrich},
    author={Richter-Gebert, Jürgen},
	book={
		title={Zeichnung -- Figur -- Zugfigur},
		subtitle={Mathematische und didaktische Aspekte
		dynamischer Geometrie-Software},
		editor={Elschenbroich, H.-J.},
		editor={Gawlick, Th.},
		editor={Henn, H.-W.},
		publisher={Franzbecker},
		address={Hildesheim}
	},
	pages={123--144},
	date={2001}
}

\bib{KortenkampRichterGebert2002}{article}{
    title={Complexity issues in dynamic geometry},
    author={Kortenkamp, Ulrich},
    author={Richter-Gebert, Jürgen},
    book={
        title={Festschrift in the honor of Stephen Smale's 70th birthday},
        editor={Rojas, M.},
        editor={Cucker, Felipe},
        publisher={World Scientific}
    },
    pages={355--404},
    date={2002}
}

\bib{KortenkampRichterGebert2006}{misc}{
    title={Cinderella},
    subtitle={The interactive geometry software},
    author={Kortenkamp, Ulrich},
    author={Richter-Gebert, Jürgen},
    date={2006},
    note={\url{http://www.cinderella.de}}
}

\bib{MorganSommese1987}{article}{
    title={A Homotopy for Solving General Polynomial Systems That Respects
    $m$-Homogeneous Structures},
    author={Morgan, Alexander},
    author={Sommese, Andrew},
    journal={Applied Mathematics and Computation},
    volume={24},
    pages={101--113},
    date={1987}
}

\bib{Smale1986}{article}{
    title={Newton’s Method Estimates from Data at One Point},
    author={Smale, Steve},
    book={
        title={The Merging of Disciplines: New Directions in Pure,
        Applied, and Computational Mathematics},
        editor={Ewing, Richard E.},
        editor={Gross, Kenneth I.},
        editor={Martin, Clyde F.},
        publisher={Springer},
        address={New York},
    },
    pages={185--196},
    date={1986},
    doi={10.1007/978-1-4612-4984-9\_13}
}

\bib{SommeseWampler2005}{book}{
    title={The Numerical Solution of Systems of Polynomials Arising in
        Engineering and Science},
    author={Sommese, Andrew J.},
    author={Wampler, Charles W., II},
    publisher={World Scientific},
    address={Singapore},
    date={2005}
}

\end{biblist}
\end{bibdiv}

\end{document}